\documentclass[review,onefignum,onetabnum]{siamart171218}



\usepackage{lipsum}
\usepackage{mathrsfs}
\usepackage{amsfonts}
\usepackage{graphicx}
\usepackage{epstopdf}
\usepackage{algorithmic}
\usepackage{amssymb}
\usepackage{amsmath}


\ifpdf
  \DeclareGraphicsExtensions{.eps,.pdf,.png,.jpg}
\else
  \DeclareGraphicsExtensions{.eps}
\fi


\newsiamremark{hypothesis}{Hypothesis}
\crefname{hypothesis}{Hypothesis}{Hypotheses}
\newsiamthm{claim}{Claim}

\nolinenumbers

\makeatletter
\g@addto@macro\bfseries{\boldmath}
\makeatother

\headers{Uniform asymptotic expansions for Whittaker functions}{T. M. Dunster}


\title{Uniform asymptotic expansions for the Whittaker functions $M_{\kappa,\mu}(\MakeLowercase{z})$ and $W_{\kappa,\mu}(\MakeLowercase{z})$ with $\mu$ large}



\author{T. M. Dunster\thanks{Department of Mathematics and Statistics, San Diego State University, 5500 Campanile Drive, San Diego, CA 92182-7720, USA. 
  (\email{mdunster@sdsu.edu}, \url{https://tmdunster.sdsu.edu}).}}

\usepackage{amsopn}

\makeatletter
\newcommand*{\addFileDependency}[1]{
  \typeout{(#1)}
  \@addtofilelist{#1}
  \IfFileExists{#1}{}{\typeout{No file #1.}}
}
\makeatother


\nolinenumbers



\begin{document}

\maketitle

\begin{abstract}
  Uniform asymptotic expansions are derived for Whittaker's confluent hypergeometric functions $M_{\kappa,\mu}(z)$ and $W_{\kappa,\mu}(z)$, as well as the numerically satisfactory companion function $W_{-\kappa,\mu}(ze^{-\pi i})$. The expansions are uniformly valid for $\mu \rightarrow \infty$, $0 \leq \kappa/\mu \leq 1-\delta <1$, and $0 \leq \arg(z) \leq \pi$. By using appropriate connection and analytic continuation formulas these expansions can be extended to all unbounded nonzero complex $z$. The approximations come from recent asymptotic expansions involving  elementary functions and Airy functions, and explicit error bounds are either provided or available.
\end{abstract}

\begin{keywords}
  {Whittaker functions, Confluent hypergeometric functions, WKB methods, Turning points, Asymptotic expansions}
\end{keywords}

\begin{AMS}
  33C15, 34E20, 34M60, 34E05
\end{AMS}

\section{Introduction} 
\label{sec1}

The purpose of this paper is to obtain uniform asymptotic expansions as $\mu \rightarrow \infty$ and $0 \leq \kappa/\mu \leq 1-\delta$ for the Whittaker confluent hypergeometric functions $M_{\kappa,\mu}(z)$ and $W_{\kappa,\mu}(z)$; here and throughout $\delta$ is an arbitrary small positive constant.

Amongst their physical applications, these functions appear in solutions of the wave equation in paraboloidal coordinates \cite[Chap. 7]{Hochstadt:1971:FMP}, and also are used to describe the behavior of charged particles in a Coulomb potential \cite{Gaspard:2018:CFB}. In \cite{Dunster:1994:UAS} and \cite{Nestor:1984:UAA} they play a central role in the uniform asymptotic theory of differential equations having a coalescing turning point and simple pole.

$M_{\kappa,\mu}(z)$ and $W_{\kappa,\mu}(z)$ are solutions of the Whittaker differential equation
\begin{equation}  \label{02}
\frac{d^{2}y}{dz^{2}}=
\left(\frac{\mu^{2}-\frac{1}{4}}{z^{2}}-\frac{\kappa}{z}
+\frac{1}{4}\right)y,
\end{equation}
which has a regular singularity at $z=0$ and an irregular singularity at $z=\infty$. These two Whittaker functions can be expressed in terms of confluent hypergeometric functions via \cite[Eqs. 13.2.3, 13.4.4, 13.14.2 and 13.14.3]{NIST:DLMF}
\begin{equation}
\label{M}
M_{\kappa,\mu}(z)
=z^{\mu+\frac{1}{2}}e^{-z/2}
M\left(\tfrac{1}{2}+\mu-\kappa,1+2\mu,z\right)
\quad   (2\mu \neq -1,-2,-3,\cdots),
\end{equation}
and
\begin{equation}
\label{W}
W_{\kappa,\mu}(z)
=z^{\mu+\frac{1}{2}}e^{-z/2}
U\left(\tfrac{1}{2}+\mu-\kappa,1+2\mu,z\right),
\end{equation}
where
\begin{equation}
\label{MM}
M(a,b,z)
=\sum_{s=0}^{\infty}\frac{(a)_{s}}{(b)_{s}s!}z^{s},
\end{equation}
and
\begin{equation}
\label{U}
U(a,b,z)=\frac{1}
{\Gamma\left(a\right)}\int_{0}^{\infty}e^{-zt}
t^{a-1}(1+t)^{b-a-1}dt
\quad (\Re(a)>0,\,|\arg(z)| < \tfrac{1}{2}\pi).
\end{equation}

Assuming $\Re(\mu) \geq -\frac{1}{2}$ they are fundamental solutions that are recessive (bounded) at one of the singularities, according to the limiting behaviour \cite[Eqs. 13.13.14 and 13.14.21]{NIST:DLMF}
\begin{equation}
\label{104}
M_{\kappa,\mu}(z)=z^{\mu +\frac{1}{2}}
\left\{1+\mathcal{O}(z)\right\}
\quad (z \rightarrow 0),
\end{equation}
and
\begin{equation}
\label{110}
W_{\kappa,\mu}(z)=z^{\kappa}e^{- z/2}
\left\{1+\mathcal{O}\left(z^{-1}\right)\right\}
\quad (z \rightarrow \infty, \, |\arg(z)| 
\leq \tfrac{3}{2}\pi - \delta).
\end{equation}
All other independent solutions of (\ref{02}) are unbounded at these singularities. Generally both functions are multivalued, and with a branch cut taken along $-\infty < z \leq 0$ the principal branches for $M_{\kappa,\mu}(z)$ and $W_{\kappa,\mu}(z)$ correspond to those on the RHS of (\ref{104}) and (\ref{110}) respectively.

The differential equation (\ref{02}) is unchanged if $\kappa$ and $z$ both change sign. Thus a third solution is given by $W_{-\kappa,\mu}(ze^{-\pi i})$, and this is recessive at $z = \infty \exp(\pi i)$. As such, the three functions form a numerically satisfactory set of solutions in the upper half plane $0 \leq \arg(z) \leq \pi$ ($z \neq 0$), which we denote by $\mathbb{C}^{+}$. 

We shall obtain expansions for these three solutions, and we only need to consider $\mathbb{C}^{+}$, since we can extend our results to the lower half plane $-\pi \leq \arg(z) \leq 0$ ($0 \leq \arg(\bar{z}) \leq \pi$) via the Schwarz reflection formulas (assuming $\mu, \kappa \in \mathbb{R}$)
\begin{equation}
\label{92a}
M_{\kappa,\mu}(z)
=\overline{M_{\kappa,\mu}(\bar{z})}, \quad
W_{\kappa,\mu}(z)
=\overline{W_{\kappa,\mu}(\bar{z})},
\end{equation}
as well as 
\begin{equation}
\label{92b}
W_{-\kappa,\mu}\left(ze^{\pi i}\right)
=\overline{W_{-\kappa,\mu}\left(\bar{z}e^{-\pi i}\right)},
\end{equation}
which is a numerically satisfactory companion solution for $-\pi \leq \arg(z) \leq 0$. Furthermore, extensions of our results to other values of $\arg(z)$, i.e. across the cut $-\infty < z \leq 0$, come from using well-known analytic continuation formulas \cite[Sect. 13.14(ii)]{NIST:DLMF}. Included in these is the relation
\begin{equation}
\label{92c}
M_{\kappa,\mu}\left(ze^{\pm\pi\mathrm{i}}\right)
=\pm i e^{\pm\mu\pi i}M_{-\kappa,\mu}(z),
\end{equation}
and so in conjunction with the new expansions for $W_{-\kappa,\mu}\left(\mu ze^{\pm \pi i}\right)$ our results also provide asymptotic expansions for all solutions when $0 \leq -\kappa/\mu \leq 1-\delta$ ($\mu > 0$).

The case we consider here, $\mu \rightarrow \infty$ with $0 \leq \kappa/\mu \leq 1-\delta <1$, was first studied by Olver \cite[Chap. 7, Sect. 11.1]{Olver:1997:ASF}. He obtained a one term approximation, complete with error bounds, for $M_{\kappa,\mu}(x)$ and $W_{\kappa,\mu}(x)$ with $x$ real and positive. The results here extend Olver's results in several aspects. Firstly, in \cref{sec2} we extend the approximations to expansions by using the recent results in \cite{Dunster:2020:LGE}. These are Liouville-Green (LG) expansions of a less used form, where the coefficients in the asymptotic expansions appear in the exponent of the approximating exponential function. This has the advantage over the standard LG expansions in that the coefficients involved can be evaluated simply and explicitly, as well as being accompanied by simpler and sharper error bounds (which we shall supply). 

Secondly, as we alluded to earlier, our expansions are valid for complex $z$. As we shall show, for large $\mu$ and $0 \leq \kappa/\mu \leq 1-\delta$ the equation has two conjugate turning points (as defined by \cite[Sect. 2.8(i)]{NIST:DLMF}) in the right half plane $\Re(z) \geq 0$. Our LG expansions, along with suitable connection formulas, are valid at all points in $\mathbb{C}^{+}$ except in the neighbourhood of the turning point in the first quadrant. 

In \cref{sec3} we go further by using recent results given in \cite{Dunster:2020:SEB} to obtain asymptotic expansions that are valid at this turning point, and in fact valid for all $z \in \mathbb{C}^{+}$. These involve Airy functions, and while the previous general theory of turning point expansions is well established (see \cite[Chap. 11]{Olver:1997:ASF} the coefficients are typically very difficult to compute beyond one term. A feature of the new asymptotic expansions in \cite{Dunster:2020:SEB} is that the coefficients that appear are significantly easier to compute, either directly if not too close to the turning point, or via Cauchy's integral theorem in a neighbourhood of the turning point. Moreover the explicit error bounds provided by \cite{Dunster:2020:SEB} are readily computable. We remark that the added complication of the turning point in the present application being complex-valued rather than lying on the real axis does not present a significant hurdle in the application of these recent Airy function expansions.

A number of powerful asymptotic approximations for Whittaker functions have previously been obtained by Olver. For large $\mu$ and fixed $\kappa$ he constructed a simple expansion for $M_{\kappa,\mu}(z)$ in \cite[Chap. 10, Ex. 3.4]{Olver:1997:ASF} which is valid for real or complex values of $z$ and the two parameters. Next, in his landmark paper \cite{Olver:1975:SOL} Olver obtained asymptotic approximations for solutions of differential equations having two coalescing turning points. In \cite{Olver:1980:WFW} this was applied to obtain approximations for the Whittaker functions in terms of parabolic cylinder functions, again for large $\mu$, that are valid for $-(1 - \delta)\mu \leq \kappa \leq \mu$ and also $\mu \leq \kappa \leq \mu/\delta$. The argument $z$ is either real or purely imaginary, as are the parameters $\kappa$ and $\mu$. Although the parameter range is large, the price is that these approximations consist of only one term rather than expansions, and involve complicated Liouville transformation variables and error bounds.

For large $|\mu|$ with $\mu$ or $\kappa$ imaginary see \cite[Chap. 11, Sect. 4.3]{Olver:1997:ASF} and \cite{Dunster:2003:UAW}. For large $\kappa$ and $0 \leq \mu \leq (1 - \delta)\kappa$ Dunster \cite{Dunster:1989:UAW} obtained expansions, with error bounds, for complex argument $z$ in terms of Bessel functions. This used an asymptotic theory of differential equations having a coalescing turning point and double pole \cite{Boyd:1986:UAS}. Also for $\kappa$ large we mention Olver's results \cite[Chap. 11, Ex. 7.3]{Olver:1997:ASF} for $W_{\kappa,\mu}(x)$, with $\mu$ real and bounded, and $x \in [\delta,\infty)$.

\section{Liouville-Green expansions} 
\label{sec2}

We begin by replacing $z$ by $\mu z$ in (\ref{02}) to obtain
\begin{equation}
\label{88}
d^{2}w/dz^{2}=\left\{\mu^{2}f(z)+g(z)\right\}w,
\end{equation}
where for $\lambda=\kappa /\mu$
\begin{equation}
\label{89}
f(z)=\frac{z^{2}-4\lambda z+4}{4z^{2}},
\quad g(z)=-\frac{1}{4z^{2}}.
\end{equation}
This differential equation has solutions $M_{\kappa,\mu}(\mu z)$, $W_{\kappa,\mu}(\mu z)$ and $W_{-\kappa,\mu}(\mu ze^{-\pi i})$. The equation (\ref{02}) has been recast into this form in order for the subsequent asymptotic approximations to be uniformly valid at both $z=0$ and at $z=\infty$: see \cite[Remark 1.2]{Dunster:2020:LGE} and \cite[Chap. 10, Thm. 4.1 and Ex. 4.1]{Olver:1997:ASF}.

For large $\mu$ it has turning points where $f(z)=0$, which for $0 \leq \kappa < \mu$ are found to be a pair of conjugate points in the right half plane located at $z=z^{\pm}(\lambda)$ say, given by
\begin{equation}
\label{89a}
z^{\pm}(\lambda)=2\lambda \pm 2i\sqrt{1-\lambda^2}
=2 e^{\pm i \theta(\lambda)},
\end{equation}
where $\theta(\lambda) = \arccos(\lambda) \in (0,\frac{1}{2}\pi]$ for $0\leq \lambda < 1$.

Note that they coalesce into a double pole at $z=2$ when $\lambda=1$, and our expansions would no longer be valid on the positive real $z$ axis in this case. Moreover the expansions of \cref{sec3} break down when the two turning points are close or coincide. Thus, in order for them to be bounded away from one another we require in this section and the next that
\begin{equation}
\label{90}
\lambda =\kappa /\mu \in \left[0,1-\delta \right].
\end{equation}
As we mentioned in \cref{sec1} the case $\lambda <0$ ($\kappa <0$) is covered by connection formulas.

Next, the following Liouville variable appears in all our expansions (see \cite[Eq. (1.3)]{Dunster:2020:LGE})
\begin{equation}
\label{91}
\xi =\int f^{1/2}(z)dz=\frac{1}{2}Z
-\lambda \ln(Z+z-2\lambda)
-\ln \left(\frac{Z-\lambda z+2}{z}\right),
\end{equation}
where
\begin{equation}
\label{92}
Z(z)=\left(z^{2}-4\lambda z+4\right)^{1/2}.
\end{equation}
We find as $z \rightarrow 0$ that $\Re(\xi) \rightarrow -\infty$ such that
\begin{equation}
\label{103}
\xi =\ln \left(\tfrac{1}{4}z\right)+1-\lambda \ln(2-2\lambda)+\mathcal{O}(z),
\end{equation}
and $\Re(\xi) \rightarrow \pm \infty$ as $\Re(z) \rightarrow \pm \infty$ such that
\begin{equation}
\label{109}
\xi =\tfrac{1}{2}z-\lambda\ln(2z)
-\lambda -\ln(1-\lambda)
+\mathcal{O}\left(z^{-1}\right).
\end{equation}
The branches of these multi-valued functions will be specified more precisely shortly. 

\begin{figure}[hthp]
  \centering
  \includegraphics[trim={0 210 0 210}, width=0.9\textwidth,keepaspectratio]{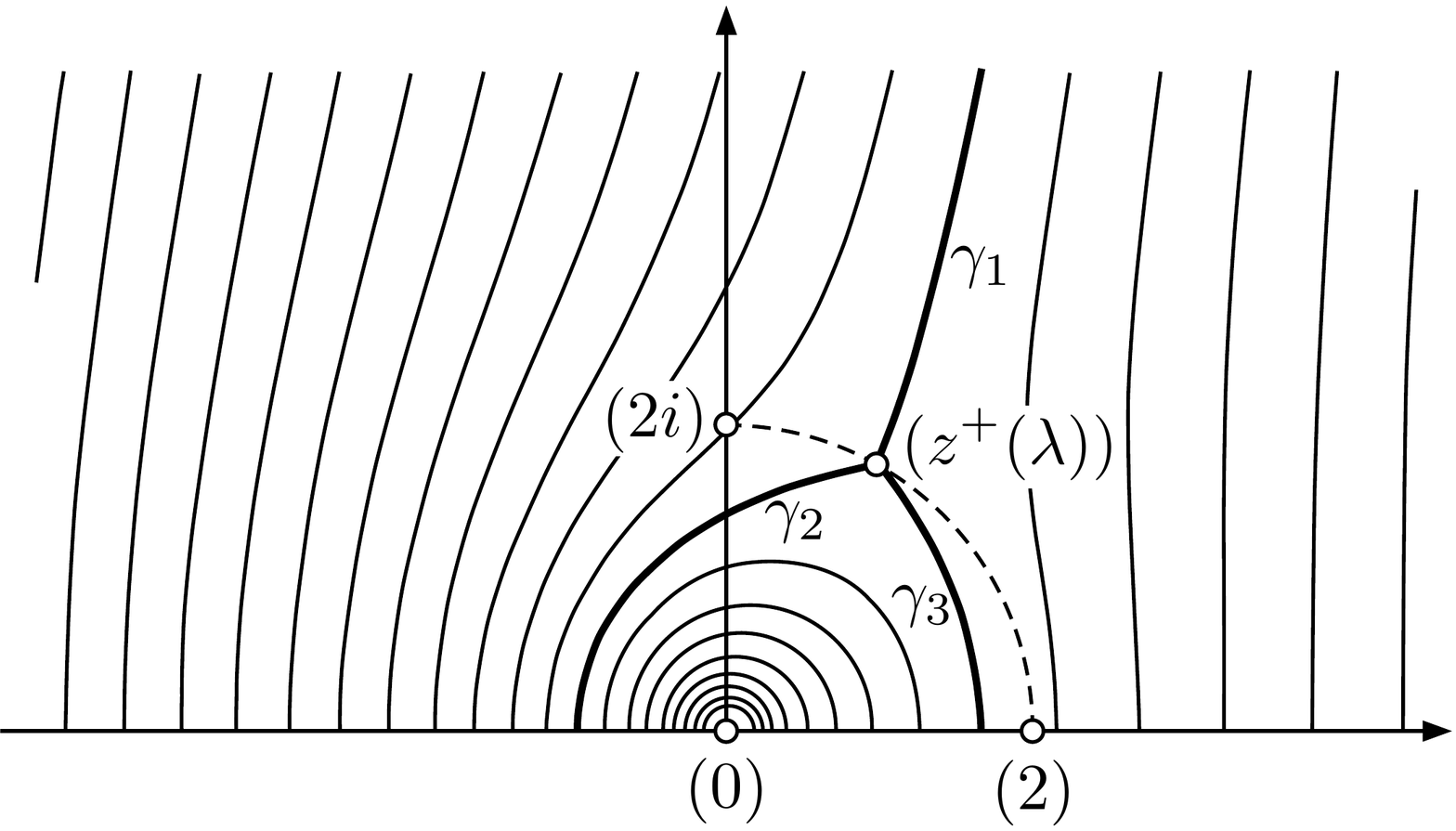}
  \caption{Level curves $\Re(\xi)=\mathrm{constant}$}
  \label{fig:fig1}
\end{figure}


In \cref{fig:fig1} a number of level curves $\Re(\xi)=\mathrm{constant}$ are shown in the upper half plane under consideration, and these determine the regions of validity of our asymptotic expansions. Here we have used the particular value $\lambda=\frac{1}{2}$ ($z^{+}(\frac{1}{2})=1+i\sqrt{3}$), but the general configuration remains the same for $0\leq \lambda \leq 1-\delta$. The dashed curve is the quarter circle $|z|=2$, $0\leq \arg(z)\leq \frac{1}{2}\pi$, on which the turning point $z^{+}(\lambda)$ lies for $0\leq \lambda \leq 1$ (see (\ref{89a})).

The level curves labelled $\gamma_{j}$ ($j=1,2,3$) emanating from the turning point $z^{+}(\lambda)$ are particularly significant, and divide the half plane into three regions $S_{j}$ ($j=0,\pm 1$) as depicted in \cref{fig:fig2}. The subscripts are chosen to match the Airy functions used in \cref{sec3}.

The interior of the finite region $S_{-1}$ contains the singularity $z=z^{(1)}:=0$ and is where $M_{\kappa,\mu}(\mu z)$ is recessive (exponentially small for large $\mu$). The interior of $S_{0}$ containing $z=z^{(2)}:=+\infty$ is where $W_{\kappa,\mu}(\mu z)$ is recessive, and the interior of $S_{1}$ containing $z=z^{(3)}:=\infty e^{\pi i}$ is where $W_{-\kappa,\mu}(\mu ze^{-\pi i})$ is recessive. All three solutions are dominant (exponentially large for large $\mu$) in the exterior of their respective regions of recessiveness.

\begin{figure}[hthp]
  \centering
  \includegraphics[trim={0 170 0 180}, width=0.6\textwidth,keepaspectratio]{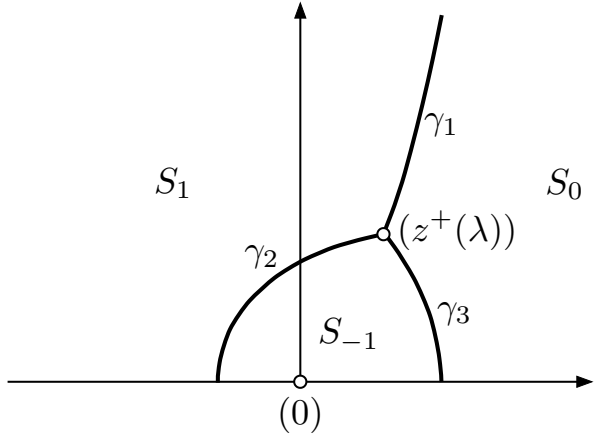}
  \caption{Regions $S_{j}$ ($j=0,\pm 1$)}
  \label{fig:fig2}
\end{figure}

Our LG expansions will be uniformly valid in unbounded domains $Z_{j}(\delta)$ ($j=1,2,3$) which consist of all points in $\mathbb{C}^{+}$ except those whose distance to the curve $\gamma_{j}$ is less than $\delta$. Note that this means the turning point $z^{+}(\lambda)$ does not lie in any of these three regions. The region $Z_{j}(\delta)$ has been defined to meet the requirement that all points $z$ in it can be linked to $z^{(j)}$ by a path $\mathscr{L}_{j}(z)$ (say) that (i) consists of a finite chain of $R_{2}$ arcs (as defined in \cite[Chap. 5, Sect. 3.3]{Olver:1997:ASF}), (ii) as $t$ passes along this path from $a _{j}$ to $z$, the real part of $\xi(t)$ is monotonic, and (iii) the path does not get closer than a distance $\delta$ to the turning point, i.e. $|t-z^{+}(\lambda)|\geq \delta$ for all $t \in \mathscr{L}_{j}(z)$. In condition (ii) $\xi(t)$ is given by (\ref{91}) with $z$ replaced by $t$ and with branches taken so that it is a continuous function of $t$ in $Z_{j}(\delta)$. We shall call these paths \emph{progressive}.

For $\xi = \xi(z)$ we take a branch cut along $\gamma_{2}$ and choose the branches in (\ref{91}) so that $\Re(\xi) \rightarrow +\infty$ as $\Re(z) \rightarrow z^{(2)}=+\infty$ and such that it is a continuous function (indeed analytic) in $\mathbb{C}^{+} \setminus \gamma_{2}$. Hence $\xi$ is real on the positive $z$ axis, and (from (\ref{103}) and (\ref{109})) recall that $\Re(\xi) \rightarrow -\infty$ as $\Re(z) \rightarrow z^{(1)}=0$ and also as $z \rightarrow z^{(3)}= \infty e^{\pi i}$. We shall call this the principal value of $\xi$ (as well as $Z$) in this cut half plane.

We shall allow $z$ to cross the branch cut as appropriate (and described in more detail below), and if so $\xi$ must vary continuously along a path that crosses this cut. Thus $\Re(\xi)$ will take the opposite sign if on a different sheet. For example, if $z$ travels along a ray from $0$ to $\infty$ in the left half plane (hence across the cut $\gamma_{2}$) then $\Re(\xi)$ will increase continuously and  monotonically from $-\infty$ to $+\infty$.

Coefficients that appear in the expansions are provided by \cite[Eqs. (1.10) - (1.12)]{Dunster:2020:LGE}. With the aid of (\ref{89}), (\ref{92}) and
\begin{equation}
\label{95}
\frac{d\xi}{dz}=f^{1/2}(z)=\frac{Z}{2z},
\end{equation}
which comes from (\ref{91}), these are given by
\begin{equation}
\label{97}
\hat{E}_{s}(z)=\frac{1}{2}\int_{\infty}^{z}
\frac{Z(t)\hat{F}_{s}(t)}{t} dt
\quad (s=1,2,3,\cdots),
\end{equation}
where
\begin{equation}
\label{93}
\hat{F}_{1}(z)=
\frac{4f(z)f''(z) - 5f'^{2}(z)}
{32f^{3}(z)}+\frac{g(z)}{2f(z)}=
-\frac{z\left(z^{3}-16z+4\lambda^{2}z+16\lambda \right)}{2Z^{6}},
\end{equation}
\begin{multline}
\label{94}
\hat{F}_{2}(z)=-\frac{z}{Z}\frac{d\hat{F}_{1}(z)}{dz}
=-\frac{z}{Z^{9}}
\left\{z^{5}+2\lambda z^{4}+8\left(\lambda^{2}-5\right)z^{3}-8\lambda \left(\lambda^{2}-9\right)z^{2} \right.
\\ \left.
-16\left(5\lambda^{2}-4\right)z-32\lambda \right\},
\end{multline}
and
\begin{equation}
\label{96}
\hat{F}_{s+1}(z)=-\frac{z}{Z}\frac{d\hat{F}_{s}(z)}{dz}-\frac{1}{2}\sum_{j=1}^{s-1}\hat{F}_{j}(z)\hat{F}_{s-j}(z)
\quad (s=2,3,4,\cdots).
\end{equation}
The branch of each coefficient is the same as the principal one for $Z$, so that each $\hat{E}_{s}(z)$ continuous in $\mathbb{C}^{+}\setminus \gamma_{2}$, with $Z>0$ in (\ref{97}), (\ref{94}) and (\ref{96}) when $z \in (0,\infty)$.

Note that the arbitrary lower integration limit in (\ref{97}) was chosen for convenience so that
\begin{equation}
\label{97a}
\lim_{z \rightarrow \infty}\hat{E}_{s}(z)=0
\quad (s=1,2,3,\cdots).
\end{equation}

From (\ref{97}) and (\ref{93}) we have for the first coefficient on explicit integration
\begin{equation}
\label{98}
\hat{E}_{1}(z)=\frac{\lambda z^{3}+6\left(1-2\lambda^{2}\right)z^{2}+12\lambda^{3}z+8\lambda^{2}-16}{24\left(1-\lambda^{2}\right)Z^{3}}
-\frac{\lambda}{24\left(1-\lambda^{2}\right)}.
\end{equation}

Let us show how all the other coefficients can be explicitly evaluated. Firstly from (\ref{92})
\begin{equation}
\label{99a}
\frac{dZ}{dz}=\frac{z-2\lambda}{Z},
\end{equation}
and so from (\ref{93}) - (\ref{96}) one can verify by induction that
\begin{equation}
\label{99b}
\hat{F}_{s}(z)=z Z^{-3s-3} p_{2s+1}(z),
\end{equation}
where $p_{2s+1}(z)$ is a polynomial of degree $2s+1$. Now let
\begin{equation}
\label{99c}
\tau=z-2\lambda,
\end{equation}
and
\begin{equation}
\label{99cc}
\beta=\tau / Z.
\end{equation}
Then from (\ref{92}), (\ref{99c}) and (\ref{99cc})
\begin{equation}
\label{99ccc}
\tau=2\beta\left(
\frac{1-\lambda^{2}}{1-\beta^{2}}\right)^{1/2},
\end{equation}
and
\begin{equation}
\label{99d}
\frac{d \beta}{dz}
=\frac{4\left(1-\lambda^{2}\right)}{Z^{3}}.
\end{equation}

If we denote $Z(\tau+2\lambda)=\tilde{Z}(\beta)$ where $\tau = \tau(\beta)$ is given by (\ref{99ccc}), and so from (\ref{99cc})
\begin{equation}
\label{99e}
\tilde{Z}(\beta) = \frac{\tau}{\beta}
=2\left(\frac{1-\lambda^{2}}{1-\beta^{2}}\right)^{1/2}.
\end{equation}
Also, from (\ref{99c}) write
\begin{equation}
\label{99ee}
p_{2s+1}(z)=p_{2s+1}(\tau+2\lambda)=\tilde{p}_{2s+1}(\beta),
\end{equation}
and hence from (\ref{97}), (\ref{99b}) and (\ref{99d})
\begin{equation}
\label{99f}
\hat{E}_{s}(z)=\frac{1}{2}\int_{\infty}^{z}
\frac{p_{2s+1}(t)}{Z^{3s+2}(t)}dt
=\frac{1}{8\left(1-\lambda^{2}\right)}
\int_{1}^{\beta} \frac{\tilde{p}_{2s+1}(b)}
{\tilde{Z}^{3s-1}(b)}db.
\end{equation}

Next, on referring to (\ref{99e}), the integrand for the odd coefficients $\hat{E}_{2m+1}(z)$ ($m=0,1,2,\cdots$) can be expressed as
\begin{equation}
\label{99g}
\frac{\tilde{p}_{4m+3}(\beta)}
{\tilde{Z}^{6m+2}(\beta)}
=\left(\frac{\beta}{\tau} \right)^{6m+2}
\tilde{p}_{4m+3}(\beta) 
= \frac{1}{2^{6m+2}}
\left(\frac{1-\beta^{2}}
{1-\lambda^{2}}\right)^{3m+1}
\tilde{p}_{4m+3}(\beta).
\end{equation}
Now decompose $p_{4m+3}(\tau+2\lambda)$ into its even (e) and odd (o) parts as functions of $\tau$:
\begin{equation}
\label{99hh}
\tilde{p}_{4m+3}(\beta)=p_{4m+3}(\tau+2\lambda)
=p_{4m+3}^{(e)}(\tau)
+p_{4m+3}^{(o)}(\tau).
\end{equation}
Thus from (\ref{99ccc}) it is clear that the fractional powers of $\beta$ in (\ref{99g}) occur for the odd powers of $\tau$, which come only from $p_{4m+3}^{(o)}(\tau)$. All these terms are of the form
\begin{equation}
\label{99h}
\mathrm{constant}\times\beta^{2s+1}\left(1-\beta^{2}\right)^{3m-s+\frac{1}{2}}
\quad (s=0,1,2,\cdots 2m+1),
\end{equation}
where each constant involves $\lambda$ alone. These are easy to integrate, using the substitution $u=1-\beta^{2}$. The contributions from $p_{4m+3}^{(e)}(\tau)$ are polynomials in $\beta^{2}$ which of course are trivial to integrate.

For the even coefficients $\hat{E}_{2m}(z)$ ($m=1,2,3,\cdots$) we can avoid integration completely, since they can be expressed explicitly in terms of $\hat{F}_{2m+1}(z)$ ($m=0,1,2,\cdots$). Specifically, from \cite[Eq. (1.13)]{Dunster:2020:LGE} one can simply equate powers of $\mu$ in the formal expansions
\begin{equation} \label{99j}
\sum_{m=1}^{\infty }\frac{\hat{E}_{2m}(z)}{\mu^{2m}}
= -\frac{1}{2}\ln \left\{1+\sum_{m=0}^{\infty}
\frac{\hat{F}_{2m+1}(z)}{\mu^{2m+2}}\right\}.
\end{equation}

For example, on expanding the RHS for large $\mu$ and equating powers of $\mu^{-4}$ we find that $\hat{E}_{4}(z)=\frac{1}{4}\hat{F}_{1}^{2}(z)-\frac{1}{2}\hat{F}_{3}(z)$. Note that we have taken $C=0$ in the formula referenced, since $\forall m \in \mathbb{N} \cup \{0\}$ $\hat{F}_{2m+1}(z)=o(1)$ as $z \rightarrow \infty$ and so this is also true for $\hat{E}_{2m}(z)$ ($m=1,2,3,\cdots$), in accord with the lower integration limit in (\ref{97}). Incidentally (\ref{99b}) and (\ref{99j}) show that
\begin{equation} \label{99k}
\hat{E}_{2m}(0)=0
\quad (m=1,2,3,\cdots).
\end{equation}

We now apply \cite[Thm. 1.4]{Dunster:2020:LGE} with $\xi = \alpha_{1}$ corresponding to $z=z^{(1)}=0$, and $\xi = \alpha_{2}$ corresponding to $z=z^{(2)}=+\infty$. From our discussion earlier about the branch of $\xi$ we then have $\Re(\alpha_{j})=(-1)^{j}\infty$ as required in the hypothesis of the theorem, and consequently on using (\ref{95}) we obtain LG solutions of (\ref{88}), for arbitrary positive integer $n$, of the form
\begin{equation}
\label{106}
w_{n,1}(\mu,z)=\left(\frac{z}{Z}\right)^{1/2}
\exp\left\{\mu\xi+\sum_{s=1}^{n-1}
\frac{\hat{E}_{s}(z)}{\mu^{s}}\right\}
\left\{1+\eta_{n,1}(\mu,z)\right\},
\end{equation}
and
\begin{equation}
\label{112}
w_{n,2}(\mu,z)=\left(\frac{z}{Z}\right)^{1/2}
\exp\left\{-\mu \xi+\sum_{s=1}^{n-1}\left(-1\right)^{s}
\frac{\hat{E}_{s}(z)}{\mu^{s}}\right\}
\left\{1+\eta_{n,2}(\mu,z)\right\}.
\end{equation}

Note that with our branch cut $(z/Z)^{1/2} \rightarrow 1$, and hence $\hat{E}_{s}(\infty) \rightarrow 0$ as $z \rightarrow \infty$ in any direction in $\mathbb{C}^{+}$. Here the relative error terms $\eta_{n,j}(\mu,z)$ ($j=1,2$) are $\mathcal{O}(\mu^{-n})$ uniformly for $z \in Z_{j}(\delta)$, with error bounds provided by \cref{thm:MWLG} below. From (\ref{103}) and (\ref{109}) the important property of these functions is that $w_{n,1}(\mu,z) \rightarrow 0$ as $z \rightarrow 0$ ($\Re(\xi) \rightarrow -\infty$) and $w_{n,2}(\mu,z) \rightarrow 0$ as $\Re(z) \rightarrow \infty$ ($\Re(\xi) \rightarrow \infty$).

Matching solutions recessive at $z=0$ implies that $M_{\kappa,\mu}(\mu z)=c_{n,1}(\mu)w_{n,1}(\mu,z)$ for some constant $c_{n,1}(\mu)$. Then using (\ref{104}), (\ref{103}), (\ref{99k}) and (\ref{106}) yields
\begin{equation}
\label{107}
c_{n,1}(\mu)=
\lim_{z \rightarrow 0}\left\{
\frac{M_{\kappa,\mu}(\mu z)}{w_{n,1}(\mu,z)} \right\}
=\sqrt{2\mu}\left\{2(1-\lambda)\right\}^{\kappa}
\left(\frac{4\mu}{e}\right)^{\mu}
\exp \left\{-\sum_{s=1}^{n-1}
\frac{\hat{E}_{s}(0)}{\mu^{s}}\right\}.
\end{equation}
Note from (\ref{92}) and (\ref{98})
\begin{equation}
\label{101}
\hat{E}_{1}(0)=\frac{2-\lambda^{2}}{24\left(1-\lambda^{2}\right)}.
\end{equation}

Similarly, on matching solutions recessive at $z=+\infty$, and using (\ref{110}), (\ref{109}), (\ref{99k}) and (\ref{112}), results in the identification $W_{\kappa,\mu}(\mu z)=c_{2}(\mu)w_{n,2}(\mu,z)$, where
\begin{equation}
\label{113}
c_{2}(\mu)=
\lim_{z \rightarrow \infty}\left\{
\frac{W_{\kappa,\mu}(\mu z)}{w_{n,2}(\mu,z)} \right\}
=\frac{1}{(1-\lambda)^{\mu}}
\left(\frac{\mu}{2e}\right)^{\kappa}.
\end{equation}

In summary we have our first main result.
\begin{theorem}
\label{thm:MWLG}
As $\mu \rightarrow \infty$, $\lambda=\kappa/\mu \in [0,1-\delta]$, and positive integers $n$ and $r$
\begin{multline}
\label{117}
M_{\kappa,\mu}(\mu z)=(4\mu)^{\mu}
\left\{2(1-\lambda)\right\}^{\kappa}
\left(\frac{2\mu z}{Z}\right)^{1/2}   \\
\times \exp \left\{\mu (\xi-1)
+\sum_{s=1}^{n-1}\frac{\hat{E}_{s}(z)-\hat{E}_{s}(0)}
{\mu^{s}}\right\}\left\{1+\eta_{n,1}(\mu,z)\right\},
\end{multline}
and
\begin{multline}
\label{124}
W_{\kappa,\mu}(\mu z)=\left(\frac{\mu}{2e}\right)^{\kappa}\frac{1}{\left(1-\lambda \right)^{\mu}}\left(\frac{z}{Z}\right)^{1/2}\\
\times \exp \left\{-\mu \xi +\sum_{s=1}^{n-1}\left(-1\right)^{s}\frac{\hat{E}_{s}(z)}{\mu^{s}}\right\}\left\{1+\eta_{n,2}(\mu,z)\right\},
\end{multline}
where for $z \in Z_{1}(\delta)$
\begin{multline}
\label{120}
\left| \eta_{n,1}(\mu,z)\right| \leq \left| \exp \left\{\sum_{s=n}^{n+r-1}\frac{\hat{E}_{s}(z)-\hat{E}_{s}(0)}
{\mu^{s}}\right\}-1\right| \\
+\frac{\omega_{n+r,1}(\mu,z)}{\mu^{n+r}}\exp \left\{\frac{\varpi_{n+r,1}(\mu,z)}{\mu}+\sum_{s=n}^{n+r-1}
\frac{\Re\{\hat{E}_{s}(z)\}-\hat{E}_{s}(0)}
{\mu^{s}}+\frac{\omega_{n+r,1}(\mu,z)}{\mu^{n+r}}\right\},
\end{multline}
and for $z \in Z_{2}(\delta)$
\begin{multline}
\label{126}
\left| \eta_{n,2}(\mu,z)\right| \leq \left| \exp \left\{\sum_{s=n}^{n+r-1}\left(-1\right)^{s}\frac{\hat{E}_{s}(z)}
{\mu^{s}}\right\}-1\right| \\
+\frac{\omega_{n+r,2}(\mu,z)}{\mu^{n+r}}\exp \left\{\frac{\varpi_{n+r,2}(\mu,z)}{\mu}+\sum_{s=n}^{n+r-1}\left(-1\right)^{s}\frac{\Re\{\hat{E}_{s}(z)\}}
{\mu^{s}}+\frac{\omega_{n+r,2}(\mu,z)}
{\mu^{n+r}}\right\},
\end{multline}
where, for $j=1,2$
\begin{equation}
\label{121}
\omega_{n,j}(\mu,z)=\int_{\mathscr{L}_{j}(z)}
\left| \frac{Z\left(t\right)\hat{F}_{n}
\left(t\right)dt}{t}\right| +\sum_{s=1}^{n-1}\frac{1}{2\mu^{s}}\int_{\mathscr{L}_{j}(z)}\left| \frac{Z\left(t\right)\hat{G}_{n,s}\left(t\right)dt}{t}\right|,
\end{equation}
and
\begin{equation}
\label{123}
\varpi_{n,j}(\mu,z)=\sum_{s=0}^{n-2}\frac{2}{\mu^{s}}
\int_{\mathscr{L}_{j}(z)}\left|\frac{Z\left(t\right)
\hat{F}_{s+1}\left(t\right)dt}{t}\right|.
\end{equation}
Here $\mathscr{L}_{j}(z)$ is a progressive path linking $z$ to the singularity $z^{(j)}$ ($z^{(1)} =0$, $z^{(2)} =+\infty$), and
\begin{equation}
\label{122}
\hat{G}_{n,s}(z)=\sum_{k=s}^{n-1}\hat{F}_{k}(z)
\hat{F}_{s+n-k-1}(z).
\end{equation}
\end{theorem}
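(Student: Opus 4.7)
The proof is essentially an application of Theorem 1.4 of \cite{Dunster:2020:LGE} to equation (\ref{88}), combined with the matching identifications for the Whittaker functions. I would proceed in three steps.

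First, I would invoke Theorem 1.4 of \cite{Dunster:2020:LGE} with the choice $\alpha_{1}$ corresponding to $z^{(1)}=0$ and $\alpha_{2}$ corresponding to $z^{(2)}=+\infty$. The hypothesis $\Re(\alpha_{j})=(-1)^{j}\infty$ has already been verified via (\ref{103}) and (\ref{109}) under the branch conventions for $\xi$ and $Z$ established prior to the theorem, and the existence of progressive paths $\mathscr{L}_{j}(z)$ is built into the definition of $Z_{j}(\delta)$. The cited theorem then produces solutions $w_{n,1}(\mu,z)$ and $w_{n,2}(\mu,z)$ of (\ref{88}) in the forms (\ref{106}) and (\ref{112}), together with explicit bounds on the relative errors $\eta_{n,j}(\mu,z)$ expressed via quantities $\omega_{n,j}$ and $\varpi_{n,j}$ of the precise form (\ref{121}) and (\ref{123}). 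The bounds (\ref{120}) and (\ref{126}) thus transcribe directly from the reference.

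Second, I would identify each Whittaker function with a constant multiple of the corresponding LG solution by matching the recessive behavior at the appropriate singularity. For $M_{\kappa,\mu}(\mu z)$, recessiveness at $z=0$ together with (\ref{104}) forces $M_{\kappa,\mu}(\mu z)=c_{n,1}(\mu)\,w_{n,1}(\mu,z)$. To evaluate the limit in (\ref{107}), I would expand $w_{n,1}(\mu,z)$ as $z\to 0$: (\ref{103}) gives $\exp(\mu\xi)\sim (z/4)^{\mu}e^{\mu}(2-2\lambda)^{-\kappa}$, while $(z/Z)^{1/2}\to (z/2)^{1/2}$ and $\hat{E}_{s}(z)\to\hat{E}_{s}(0)$, and $\eta_{n,1}\to 0$, producing the stated expression for $c_{n,1}(\mu)$. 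The analogous computation for $W_{\kappa,\mu}(\mu z)$ at $z=+\infty$ using (\ref{110}), (\ref{109}), and (\ref{97a}) yields (\ref{113}), with no $\hat{E}_{s}$ contribution because of (\ref{97a}).

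Third, substituting $c_{n,1}(\mu)$ into $c_{n,1}(\mu)w_{n,1}(\mu,z)$ yields (\ref{117}) after combining the $\exp\{-\hat{E}_{s}(0)/\mu^{s}\}$ factor from (\ref{107}) with the exponential sum in (\ref{106}) to produce the difference $\hat{E}_{s}(z)-\hat{E}_{s}(0)$, and absorbing the $e^{-\mu}$ into $e^{\mu\xi}$ to obtain $e^{\mu(\xi-1)}$. The analogous substitution for $W_{\kappa,\mu}(\mu z)$ gives (\ref{124}) immediately. The error bound expressions then follow verbatim from the cited theorem. The main obstacle is purely bookkeeping: keeping the branches of $\xi$, $Z$, and $(z/Z)^{1/2}$ consistent on $\mathbb{C}^{+}\setminus\gamma_{2}$ and at the two singular limits, so that the local forms of $w_{n,j}$ are indeed compatible with the principal branches of $M_{\kappa,\mu}$ and $W_{\kappa,\mu}$. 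Once these conventions are fixed, the remainder is routine substitution.
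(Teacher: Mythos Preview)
Your proposal is correct and follows essentially the same approach as the paper: apply \cite[Thm.~1.4]{Dunster:2020:LGE} to (\ref{88}) with the reference points $z^{(1)}=0$ and $z^{(2)}=+\infty$ to obtain (\ref{106}) and (\ref{112}) with the stated error bounds, then identify the proportionality constants $c_{n,1}(\mu)$ and $c_{2}(\mu)$ by matching the recessive behaviour using (\ref{104}), (\ref{103}) and (\ref{110}), (\ref{109}), (\ref{97a}) respectively. The only item you might add explicitly is the use of (\ref{99k}) in simplifying the constant in (\ref{107}), and the fact that $\eta_{n,j}(\mu,z)\to 0$ at $z^{(j)}$ (which you do note for $j=1$) so that the limits defining $c_{n,1}$ and $c_{2}$ are exact rather than asymptotic.
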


Here and elsewhere for each $n$ the value of $r$ can be chosen to sharpen the bounds, at a small price of computing a few extra coefficients $\hat{E}_{s}(z)$. If $r=0$ is chosen then the sums on the RHS of the inequality are understood to be zero.

Both expansions in this theorem hold in domains containing the positive real axis. Recall that $Z_{j}(\delta)$ ($j=1,2$) consists of all points in $\mathbb{C}^{+}$ except those whose distance to the curve $\gamma_{j}$ is less than $\delta$ (see \cref{fig:fig2}). For complex $z$ care must be taken in (\ref{117}) when $z \in S_{1}$: in this case the branch of $\xi$ is not the principal one, but rather the one taken when crossing $\gamma_{2}$ from $S_{-1}$, and as such $\Re(\xi) \geq 0$. Also in this case the branches of $Z$ and the coefficients differ from their principal values. We give the appropriate representation in \cref{thm:MW2} below in terms of the principal values of $\xi$ and the other multi-valued functions. Note that we do not have this situation with (\ref{124}) since there are no branch cuts in $Z_{2}(\delta)$ and all functions take their principal values in this region of asymptotic validity.

An asymptotic expansion for the third fundamental solution $W_{-\kappa,\mu}(\mu ze^{-\pi i})$, which is recessive in $S_{1}$, follows in a similar manner to the previous two expansions. We again have the asymptotic solution given by (\ref{106}), but this time the reference point is taken to be $z=z^{(3)}=\infty e^{\pi i}$. This gives a solution which is recessive at this singularity, and although of a similar form, is independent of $w_{n,2}(\mu,z)$. We call it $w_{n,3}(\mu,z)$, with relative error denoted by $\eta_{n,3}(\mu,z)$.

Now from (\ref{110}) as $z \rightarrow \infty$ in $\mathbb{C}^{+}$
\begin{equation}
\label{129a}
W_{-\kappa,\mu}\left(\mu ze^{-\pi i}\right)
=e^{\kappa \pi i}(\mu z)^{-\kappa}e^{\mu z/2}\left\{1+\mathcal{O}\left(z^{-1}\right)\right\},
\end{equation}
and from this and (\ref{109}) we arrive at:
\begin{theorem}
As $\mu \rightarrow \infty$, $\lambda=\kappa/\mu \in [0,1-\delta]$, positive integer $n$ and nonnegative integer $r$
\begin{multline}  \label{129d}
W_{-\kappa,\mu}\left(\mu ze^{-\pi i}\right)
=e^{\kappa\pi i} (1-\lambda)^{\mu}
\left(\frac{2e}{\mu}\right)^{\kappa}
\left(\frac{z}{Z}\right)^{1/2}\\
\times \exp \left\{\mu \xi 
+\sum_{s=1}^{n-1}\frac{\hat{E}_{s}(z)}
{\mu^{s}}\right\}\left\{1+\eta_{n,3}(\mu,z)\right\},
\end{multline}
where for $z \in Z_{3}(\delta)$
\begin{multline}
\label{129e}
\left| \eta_{n,3}(\mu,z)\right| \leq \left| \exp \left\{\sum_{s=n}^{n+r-1}\frac{\hat{E}_{s}(z)}
{\mu^{s}}\right\}-1\right| \\
+\frac{\omega_{n+r,3}(\mu,z)}{\mu^{n+r}}\exp \left\{\frac{\varpi_{n+r,3}(\mu,z)}{\mu}
+\sum_{s=n}^{n+r-1}\frac{\Re\{\hat{E}_{s}(z)\}}{\mu^{s}}
+\frac{\omega_{n+r,3}(\mu,z)}{\mu^{n+r}}\right\}.
\end{multline}
Here $\omega_{n+r,3}(\mu,z)$ and $\varpi_{n+r,3}(\mu,z)$ are given by (\ref{121}) and (\ref{123}) with the integration path $\mathscr{L}_{3}(z)$ being a progressive one linking $z$ to $z^{(3)}=\infty e^{\pi i}$.
\end{theorem}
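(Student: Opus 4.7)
The proof will run in close parallel to the derivation of \eqref{117} and \eqref{124} in \cref{thm:MWLG}, the only novelty being that the reference singularity shifts from $z^{(1)}=0$ (or $z^{(2)}=+\infty$) to $z^{(3)}=\infty e^{\pi i}$. The plan is as follows. First I would invoke \cite[Thm. 1.4]{Dunster:2020:LGE} for equation \eqref{88} with the reference point chosen to be $z^{(3)}$. Since on the principal branch $\Re(\xi)\rightarrow -\infty$ as $z\rightarrow\infty e^{\pi i}$ (just as at $z=0$), this produces an LG solution of the same functional shape as $w_{n,1}(\mu,z)$ in \eqref{106}, which I will call $w_{n,3}(\mu,z)$, recessive at $z^{(3)}$ and with relative error $\eta_{n,3}(\mu,z)$ obeying the bound \eqref{129e}, the integration paths now being progressive paths $\mathscr{L}_{3}(z)$ from $z$ to $z^{(3)}$ and the region of uniform validity being $Z_{3}(\delta)$. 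No new analytic work is needed here: the bound is a direct transcription of \cite[Thm. 1.4]{Dunster:2020:LGE} using \eqref{95} and the coefficients $\hat{F}_{s}(z)$, $\hat{G}_{n,s}(z)$ already introduced.

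Next I would use the fact that $W_{-\kappa,\mu}(\mu z e^{-\pi i})$ is also recessive at $z^{(3)}$ in $\mathbb{C}^{+}$ (this follows from \eqref{110} applied with $(\kappa,z)\mapsto(-\kappa,ze^{-\pi i})$, equivalently from \eqref{129a}), whereas $w_{n,3}(\mu,z)$ is the unique, up to scalar multiple, recessive solution at that singularity. Hence
\begin{equation*}
W_{-\kappa,\mu}\bigl(\mu ze^{-\pi i}\bigr)=c_{3}(\mu)\,w_{n,3}(\mu,z),
\end{equation*}
and it remains to pin down $c_{3}(\mu)$.

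The constant $c_{3}(\mu)$ is obtained by letting $z\rightarrow \infty e^{\pi i}$ and comparing leading orders. On the one hand \eqref{129a} gives, writing $z=|z|e^{\pi i}$, the behaviour $W_{-\kappa,\mu}(\mu z e^{-\pi i})\sim e^{\kappa\pi i}(\mu z)^{-\kappa}e^{\mu z/2}$, and the principal branch assignment yields $(\mu z)^{-\kappa}=(\mu|z|)^{-\kappa}e^{-i\kappa\pi}$, so the $e^{\pm i\kappa\pi}$ phases cancel. On the other hand, from \eqref{109} and the fact that on the principal branch $\ln(2z)=\ln(2|z|)+i\pi$ as $z\rightarrow\infty e^{\pi i}$, together with $(z/Z)^{1/2}\rightarrow 1$ and $\hat{E}_{s}(z)\rightarrow 0$ at infinity (by \eqref{97a}), I find
\begin{equation*}
w_{n,3}(\mu,z)\sim e^{\mu\xi}\sim (2|z|)^{-\kappa}e^{-i\kappa\pi}e^{-\kappa}(1-\lambda)^{-\mu}e^{-\mu|z|/2}.
\end{equation*}
Dividing the two asymptotic forms gives $c_{3}(\mu)=e^{\kappa\pi i}(1-\lambda)^{\mu}(2e/\mu)^{\kappa}$, which is exactly the prefactor appearing in \eqref{129d}. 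Substituting this identification back into the equality $W_{-\kappa,\mu}(\mu ze^{-\pi i})=c_{3}(\mu)w_{n,3}(\mu,z)$ yields \eqref{129d}, with the bound \eqref{129e} inherited directly from \cite[Thm. 1.4]{Dunster:2020:LGE}.

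The only delicate step is the branch bookkeeping in the matching: one must confirm that the $e^{\kappa\pi i}$ factor already present in \eqref{129a} combines with the natural $e^{-i\kappa\pi}$ coming from $(\mu z)^{-\kappa}$ on the principal cut to cancel the imaginary phase picked up by $\xi$ along a path in $\mathbb{C}^{+}$ from $+\infty$ around the upper half plane to $\infty e^{\pi i}$. Once this is verified, the rest is a direct transcription of the LG machinery used in \cref{thm:MWLG}, so no additional estimates are required.
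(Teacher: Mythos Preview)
Your proposal is correct and follows essentially the same approach as the paper. The paper's argument is the brief paragraph preceding the theorem: construct $w_{n,3}(\mu,z)$ via \cite[Thm.~1.4]{Dunster:2020:LGE} with reference point $z^{(3)}=\infty e^{\pi i}$, then determine the proportionality constant from \eqref{129a} and \eqref{109}; your write-up simply makes the branch bookkeeping in that matching step explicit.
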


As in (\ref{117}) this must be modified with non-principal values of $\xi$ and the coefficients when crossing the cut along $\gamma_{2}$, but this time when $z \in S_{-1}$. We now show how both these expansions can be expressed in terms of principal values in these cases.

Before stating our main result we define sequences of constants $k_{s}$ and $l_{s}$ ($s=0,1,2,\cdots$) that will appear. These are given by $k_{2m}=l_{2m}=0$ ($m=1,2,3,\cdots$), and for $m=0,1,2,\cdots$
\begin{equation}
\label{117c}
k_{2m+1}=l_{2m+1}-\hat{E}_{2m+1}(0),
\end{equation}
in which
\begin{equation}
\label{129bb}
l_{2m+1}=\lim_{z \rightarrow -\infty}\hat{E}_{2m+1}(z)
\quad   (m=0,1,2,\cdots),
\end{equation}
where in this limit we cross the cut $\gamma_{2}$, and so must use $Z \sim |z|$ instead of $Z \sim z$. For example from (\ref{98}) we have
\begin{equation}
\label{129cc}
l_{1}=\lim_{z \rightarrow -\infty}
\frac{\lambda z^{3}}{24\left(1-\lambda^{2}\right)|z|^{3}}
-\frac{\lambda}{24\left(1-\lambda^{2}\right)}
=-\frac{\lambda}{12\left(1-\lambda^{2}\right)},
\end{equation}
and hence from (\ref{101}) and (\ref{117c})
\begin{equation}
\label{117d}
k_{1}=
-\frac{\lambda}{12\left(1-\lambda^{2}\right)}
-\hat{E}_{1}(0)=\frac{2+\lambda}{24(1+\lambda)}.
\end{equation}

Generally from (\ref{99f}), (\ref{99g}) and (\ref{99hh}) one can show that
\begin{equation}
\label{129dd}
l_{2m+1}=-\frac{1}{2^{6m+5}\left(1-\lambda^{2}\right)^{3m+2}}
\int_{-1}^{1} \left(1-\beta^{2}\right)^{3m+1}
p_{4m+3}^{(e)}(\tau) d\beta,
\end{equation}
for $m=0,1,2,\cdots$, where $\tau=\tau(\beta)$ is given by (\ref{99ccc}). Note that the integrand is a polynomial in $\beta$.


\begin{theorem}
\label{thm:MW2}
As $\mu \rightarrow \infty$, $\lambda=\kappa/\mu \in [0,1-\delta]$, positive integer $n$ and nonnegative integer $r$, $\xi$ and the coefficients taking their principal values, we have for $z \in Z_{1}(\delta) \cap S_{1}$
\begin{multline}
\label{117b}
M_{\kappa,\mu}(\mu z)=
\frac{ie^{(\mu-\kappa)\pi i}}
{\left(1-\lambda^2\right)^{\mu}
\left\{2(1+\lambda)\right\}^{\kappa}}
\left(\frac{4\mu}{e}\right)^{\mu}
\left(\frac{2\mu z}{Z}\right)^{1/2}    \\
\times \exp \left\{-\mu \xi
+\sum_{s=1}^{n-1}(-1)^{s}\frac{\hat{E}_{s}(z) 
-k_{s}}{\mu^{s}}\right\}
\left\{1+\eta_{n,4}(\mu,z)\right\},
\end{multline}
and for $z \in Z_{3}(\delta) \cap S_{-1}$
\begin{multline}  \label{129}
W_{-\kappa,\mu}\left(\mu ze^{-\pi i}\right)
=-\frac{ie^{\mu \pi i}}{\left(1+\lambda \right)^{\mu}}
\left\{\frac{e}{2\mu\left(1-\lambda^{2}\right)}\right\}^{\kappa}
\left(\frac{z}{Z}\right)^{1/2}\\
\times \exp \left\{-\mu \xi 
+\sum_{s=1}^{n-1}(-1)^{s}\frac{\hat{E}_{s}(z)-l_{s}}
{\mu^{s}}\right\}\left\{1+\eta_{n,5}(\mu,z)\right\},
\end{multline}
with error bounds
\begin{multline}
\label{117bb}
\left| \eta_{n,4}(\mu,z)\right| \leq \left| \exp \left\{\sum_{s=n}^{n+r-1}\left(-1\right)^{s}
\frac{\hat{E}_{s}(z)-k_{s}}
{\mu^{s}}\right\}-1\right| \\
+\frac{\omega_{n+r,1}(\mu,z)}{\mu^{n+r}}\exp \left\{\frac{\varpi_{n+r,1}(\mu,z)}{\mu}
+\sum_{s=n}^{n+r-1}\left(-1\right)^{s}
\frac{\Re\{\hat{E}_{s}(z)\}-k_{s}}
{\mu^{s}}+\frac{\omega_{n+r,1}(\mu,z)}
{\mu^{n+r}}\right\},
\end{multline}
and
\begin{multline}
\label{131}
\left| \eta_{n,5}(\mu,z)\right| \leq \left| \exp \left\{\sum_{s=n}^{n+r-1}(-1)^{s}\frac{\hat{E}_{s}(z)-l_{s}}
{\mu^{s}}\right\}-1\right| \\
+\frac{\omega_{n+r,3}(\mu,z)}{\mu^{n+r}}\exp \left\{\frac{\varpi_{n+r,3}(\mu,z)}{\mu}
+\sum_{s=n}^{n+r-1}(-1)^{s}\frac{\Re\{\hat{E}_{s}(z)-l_{s}\}}{\mu^{s}}
+\frac{\omega_{n+r,3}(\mu,z)}{\mu^{n+r}}\right\},
\end{multline}
where for $j=1,3$ $\omega_{n+r,j}(\mu,z)$ and $\varpi_{n+r,j}(\mu,z)$ are given by (\ref{121}) and (\ref{123}) with progressive integration paths $\mathscr{L}_{j}(z)$ linking $z$ to $z^{(j)}$.
\end{theorem}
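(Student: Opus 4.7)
The plan is to derive both formulas of the theorem by starting from the expansions~(\ref{117}) and~(\ref{129d}), which are already valid on all of $Z_1(\delta)$ and $Z_3(\delta)$ but whose representations use non-principal branches of $\xi$, $Z$, and the coefficients $\hat{E}_s$ inside $S_1$ and $S_{-1}$, and then rewriting them in terms of principal values using the branch transformations across the cut~$\gamma_2$.

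First I would establish the branch-change identities. Because $z^+(\lambda)$ is a simple zero of $f$, encircling it sends $Z\mapsto -Z$, and the antiderivative relation $\xi(z)=\xi(z^+)+\int_{z^+}^{z}f^{1/2}(t)\,dt$ then gives $\xi^{\ast}=2\xi(z^+)-\xi$ on the other sheet. The explicit representation~(\ref{99b}), $\hat{F}_s=z\,p_{2s+1}(z)/Z^{3s+3}$, shows $\hat{F}_s\mapsto(-1)^{s+1}\hat{F}_s$, so the integrand of~(\ref{97}) acquires a factor $(-1)^s$, giving $\hat{E}_s^{\ast}(z)=(-1)^s\hat{E}_s(z)+c_s$ for some constant $c_s$. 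Sending $z\to-\infty$, the principal limit is $0$ by~(\ref{97a}) while the cut-crossing limit is $l_s$ by definition~(\ref{129bb}), whence $c_s=l_s$; this is automatically $0$ for even $s$, in accord with the single-valuedness of the $\hat{E}_{2m}$ implied by~(\ref{99j}). Finally the square-root prefactor transforms as $(z/Z^{\ast})^{1/2}=\epsilon\,i\,(z/Z)^{1/2}$, where $\epsilon=+1$ for the $S_{-1}\to S_1$ continuation relevant to~(\ref{117b}) and $\epsilon=-1$ for the $S_1\to S_{-1}$ continuation relevant to~(\ref{129}).

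Next I would compute $2\xi(z^+)$ in closed form. Inserting $z^+=2e^{i\theta}$, $\theta=\arccos\lambda$, and $Z(z^+)=0$ into~(\ref{91}) with principal logarithms gives
\[
2\xi(z^+)=-2\lambda\ln 2-(1+\lambda)\ln(1-\lambda^2)+i\pi(1-\lambda),
\]
so that $\{2(1-\lambda)\}^{\kappa}e^{2\mu\xi(z^+)}=e^{i\pi(\mu-\kappa)}/\{(1-\lambda^2)^{\mu}[2(1+\lambda)]^{\kappa}\}$. Combined with $(4\mu)^{\mu}e^{-\mu}=(4\mu/e)^{\mu}$ and the $+i$ from the square root, this reproduces the numerical prefactor of~(\ref{117b}). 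For the coefficient sum, the identity $l_s-\hat{E}_s(0)=k_s$ (from~(\ref{117c}) for odd $s$, and trivial for even $s$) gives
\[
\sum_{s=1}^{n-1}\frac{\hat{E}_s^{\ast}(z)-\hat{E}_s(0)}{\mu^s}=\sum_{s=1}^{n-1}\frac{(-1)^s\hat{E}_s(z)+k_s}{\mu^s}=\sum_{s=1}^{n-1}(-1)^s\frac{\hat{E}_s(z)-k_s}{\mu^s},
\]
which is exactly the exponent in~(\ref{117b}); hence $\eta_{n,4}=\eta_{n,1}$ as analytic functions, and because $|Z(t)\hat{F}_s(t)/t|$ is invariant under the sheet change, $\omega_{n+r,1}$ and $\varpi_{n+r,1}$ are unchanged, so the bound~(\ref{117bb}) follows from~(\ref{120}) by the same substitution. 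The second pair~(\ref{129}) and~(\ref{131}) is obtained identically from~(\ref{129d}) and~(\ref{129e}) on $Z_3(\delta)\cap S_{-1}$: no $\hat{E}_s(0)$ subtraction is present (since the reference singularity is $\infty e^{i\pi}$ rather than $0$), so only $l_s$ rather than $k_s$ appears; combining the factor $-i$ from the oppositely oriented square-root continuation with $e^{2\mu\xi(z^+)}$ and the prefactor of~(\ref{129d}) yields exactly the prefactor of~(\ref{129}).

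The main obstacle is fixing the sign $\epsilon$ in $(z/Z^{\ast})^{1/2}=\epsilon\,i\,(z/Z)^{1/2}$: one must track the analytic continuation of the square root along the specific homotopy class of paths used to extend each LG solution from its region of recessiveness into the target region, and the two opposite orientations produce the $+i$ vs.~$-i$ discrepancy between the prefactors of~(\ref{117b}) and~(\ref{129}). All remaining steps are algebraic substitution of the branch-change identities into the previously established expansions and their error bounds.
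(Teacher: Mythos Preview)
Your proposal is correct and follows essentially the same approach as the paper: analytically continue the expansions (\ref{117}) and (\ref{129d}) across the cut $\gamma_2$ by replacing $Z$ with $-Z$ and tracking how $\xi$, $(z/Z)^{1/2}$, and the coefficients $\hat{E}_s$ transform, then rewrite everything in principal branches. Your presentation is a bit more explicit than the paper's terse version---you phrase the $\xi$-transformation as $\xi^{\ast}=2\xi(z^{+})-\xi$ and isolate the $\pm i$ coming from the square root, whereas the paper bundles these into the single substitution (\ref{117a})---but the underlying argument is identical.
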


\begin{proof}
Consider first (\ref{117}), and assume that $z \in S_{1}$ crossing $\gamma_{2}$ from $S_{-1}$. We must replace $Z$ by $-Z$ in all terms. Thus from (\ref{91}) on this sheet $\xi$ must be replaced by
\begin{equation}
\label{117a}
-\xi-(1+\lambda)\ln\left(1-\lambda^2\right)
-2\lambda\ln(2)+(1-\lambda)\pi i,
\end{equation}
where here $\xi$ takes its principal value. All the even coefficients $\hat{E}_{2m}(z)$ ($m = 1,2,3, \allowbreak \cdots$) are single-valued (meromorphic at $z=z^{+}(\lambda)$) and so are unchanged. However, the odd coefficients are of opposite sign and differ by a constant. Specifically, from (\ref{97}) and (\ref{99b}) one can show that on this sheet $\hat{E}_{2m+1}(z)$ ($m=0,1,2,\cdots$) must be replaced by
\begin{equation}
\label{117aa}
-\hat{E}_{2m+1}(z)+\lim_{z \rightarrow -\infty}
\hat{E}_{2m+1}(z),
\end{equation}
where, as in (\ref{129bb}), we must use $Z \sim |z|$ rather than $Z \sim z$ in this limit. The expansion (\ref{117b}) then follows from (\ref{117}), (\ref{120}) (with $\eta_{n,1}(\mu,z)$ labeled $\eta_{n,4}(\mu,z)$), (\ref{117c}), (\ref{129bb}), (\ref{117a}) and (\ref{117aa}). The expansion (\ref{129}) is proved similarly.
\end{proof}

Next from the connection formula \cite[Eq. 13.14.32]{NIST:DLMF}
\begin{equation}
\label{55}
M_{\kappa,\mu}(\mu z)
=\frac{ie^{\left(\mu -\kappa \right)\pi i}
\Gamma \left(2\mu +1\right)}
{\Gamma \left(\mu +\kappa +\frac{1}{2}\right)}
W_{\kappa,\mu}(\mu z)
+\frac{e^{-\kappa \pi i}\Gamma \left(2\mu +1\right)}
{\Gamma \left(\mu -\kappa +\frac{1}{2}\right)}
W_{-\kappa,\mu}\left(\mu ze^{-\pi i}\right),
\end{equation}
one can extend the results of \cref{thm:MWLG,thm:MW2} to all values of $z$ except for points within a distance $\mathcal{O}(\delta)$ of the turning point $z^{+}(\lambda)$. For example plugging (\ref{117}) and (\ref{124}) into (\ref{55}) yields an expansion for $W_{-\kappa,\mu}(\mu ze^{-\pi i})$ which is valid for $z \in Z_{1}(\delta) \cap Z_{2}(\delta)$. Thus this compound expansion, along with (\ref{129d}), covers all $z \in \{Z_{1}(\delta) \cap Z_{2}(\delta)\} \cup Z_{3}(\delta)$, which is equivalent to all $z \in \mathbb{C}^{+}$ except for an $\mathcal{O}(\delta)$ neighbourhood of the turning point.

This connection formula also provides a method for computing the nonzero constants $k_{2m+1}$. To do so we plug (\ref{124}) and (\ref{117b}) into (\ref{55}), and let $\Re(z) \rightarrow -\infty$. In this limit $W_{-\kappa,\mu}(\mu ze^{-\pi i})$ vanishes, with the other two being exponentially large. On recalling (\ref{97a}) and equating these dominant terms yields the formal expansion
\begin{equation}
\label{117e}
\left(\frac{e}{4\mu}\right)^{\mu}
\left(\frac{\mu}{e}\right)^{\lambda\mu}
\frac{(1+\lambda)^{\mu +\lambda\mu}\Gamma \left(2\mu +1\right)}
{\sqrt{2\mu}\,\Gamma \left(\mu +\lambda\mu +\frac{1}{2}\right)}
\sim
\exp \left\{\sum_{m=0}^{\infty}
\frac{k_{2m+1}}{\mu^{2m+1}}\right\}
\quad (\mu \rightarrow \infty).
\end{equation}
As in (\ref{99j}) we take logarithms of both sides, expand the LHS in inverse powers of large $\mu$, and then equate powers to find these coefficients. For the gamma function expansions Stirling's formula \cite[Eq. 5.11.1]{NIST:DLMF} must be used.

\subsection{Numerical examples}

\begin{figure}[hthp]
  \centering
  \includegraphics[trim={0 100 0 100}, width=0.7\textwidth,keepaspectratio]{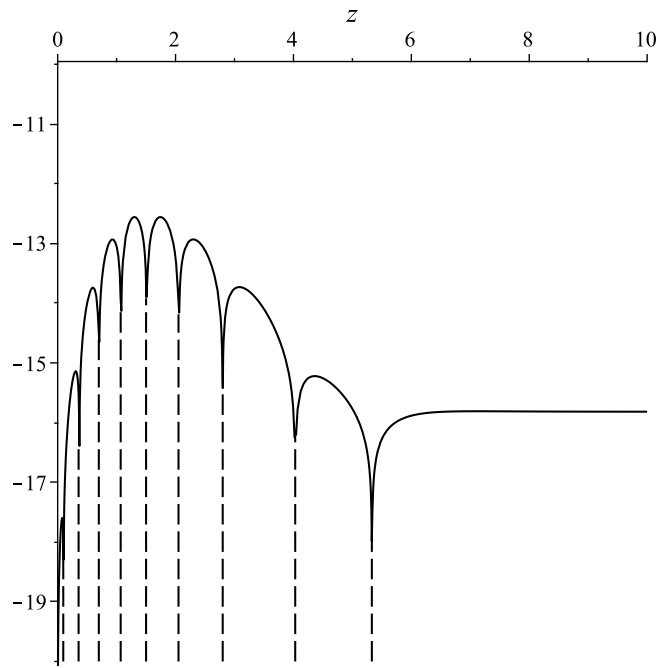}
  \caption{Graph of $\log_{10}|\eta_{11,1}(\mu,z)|$ for $\mu=20$ and $\kappa=4.5$}
  \label{fig:fig3}
\end{figure}

\begin{figure}[hthp]
  \centering
  \includegraphics[trim={0 100 0 100}, width=0.7\textwidth,keepaspectratio]{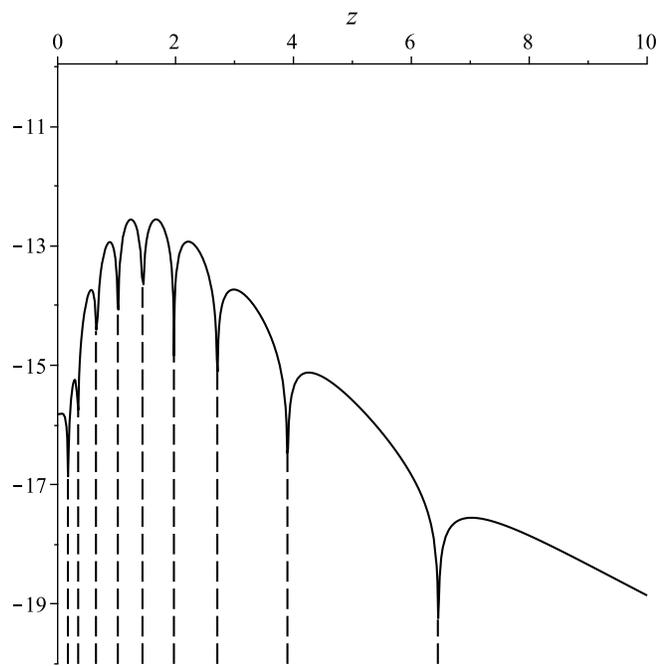}
  \caption{Graph of $\log_{10}|\eta_{11,2}(\mu,z)|$ for $\mu=20$ and $\kappa=4.5$}
  \label{fig:fig4}
\end{figure}

In \cref{fig:fig3} ($j=1$) and \cref{fig:fig4} ($j=2$) graphs are given for the exact values of $\log_{10}|\eta_{n,j}(\mu,z)|$ appearing in \cref{thm:MWLG} for $n=11$, $\mu=20$, $\kappa=4.5$ and $0<z\leq 10$. The exact values of the Whittaker functions were computed in Maple\footnote{Maple 2020. Maplesoft, a division of Waterloo Maple Inc., Waterloo, Ontario} with Digits set at 30. The values of $M_{\kappa,\mu}(\mu z)$ were checked with quadrature using the following integral representation \cite[Eq. 13.16.1]{NIST:DLMF}
\begin{multline}
\label{142}
M_{\kappa,\mu}(\mu z)
=\frac{\Gamma\left(1+2\mu\right)
(\mu z)^{\mu+\frac{1}{2}}}{2^{2\mu}
\Gamma\left(
\mu-\kappa+\frac{1}{2}\right)
\Gamma\left(\mu+\kappa+\frac{1}{2}\right)}  \\  \times
\int_{-1}^{1}e^{\mu zt/2}(1+t)^{\mu-\frac{1}{2}
-\kappa}(1-t)^{\mu+\kappa-\frac{1}{2}}dt
\quad (\Re(\mu \pm \kappa)>-\tfrac{1}{2}).
\end{multline}
Likewise, since $N:=\mu +\kappa -\frac{1}{2}=24$ is an integer for our choices of $\mu$ and $\kappa$ we were able to check the exact value of $W_{\kappa,\mu}(\mu z)$ using the finite sum \cite[Eq. 13.14.9]{NIST:DLMF}
\begin{multline}
\label{141}
W_{\kappa,\mu}(\mu z)=\frac{\Gamma \left(\mu +\kappa +\frac{1}{2}\right)
(\mu z)^{\kappa}e^{-\mu z/2}}{\Gamma \left(\mu -\kappa +\frac{1}{2}\right)}
\\ \times
\sum_{s=0}^{N}\frac{\Gamma \left(\mu -\kappa +s+\frac{1}{2}\right)}{\Gamma \left(\mu +\kappa -s+\frac{1}{2}\right)s!(\mu z)^{s}}
\quad (z \neq 0).
\end{multline}

From these graphs we notice the uniformity of the expansions throughout the interval. They also indicate as expected that $\eta_{n,1}(\mu,z) \rightarrow 0$ as $z \rightarrow 0$ and $\eta_{n,2}(\mu,z) \rightarrow 0$ as $z \rightarrow \infty$. The peak (least accuracy) in both graphs occurs between $z=1$ and $z=2$, and this is explained by the presence of the turning points which lie in the first and fourth quadrants on the circle $|z|=2$. The dashed lines are vertical asymptotes, where the relative error is zero and hence the logarithm is $-\infty$.

\section{Turning point expansions} 
\label{sec3}
In this section we construct asymptotic expansions that are valid at the turning point, thus in conjunction with the previous LG expansions cover the entire upper half plane $\mathbb{C}^{+}$. In fact the expansions here, which involve Airy functions, are themselves uniformly valid for all $z \in \mathbb{C}^{+}$. However away from the turning point, in particular on the positive real axis, the simpler LG expansions are usually preferable.

We begin by defining a new LG variable which we label $\xi^{+}$, along with the turning point variable $\zeta$ by (c.f. (\ref{91}))
\begin{multline}  \label{62}
\xi^{+}=\frac{2}{3}\zeta^{3/2}
=\int_{z^{+}(\lambda)}^{z} f^{1/2}(t)dt 
=\tfrac{1}{2}Z
-\lambda \ln \left\{\frac{1}{2}(Z+z-2\lambda)\right\} \\
-\ln \left(\frac{Z-\lambda z+2}{z}\right)
+\frac{1}{2}(1+\lambda)\ln \left(1-\lambda^2\right)
-\frac{1}{2}(1-\lambda)\pi i.
\end{multline}
Both of these appear in our Airy functions expansions, and of course $\xi^{+}$ only differs from $\xi$ by an additive constant. The turning point at $z=z^{+}(\lambda)$ is mapped to $\zeta=\xi=0$. Furthermore $\zeta $ is an analytic function of $z$ in $\mathbb{C}^{+}$, and also in part of the lower half plane (details of which do not concern us).

The branches for $\zeta$ are taken such that as $z \rightarrow z^{+}(\lambda)$
\begin{equation}
\label{63}
\zeta = 2^{-2/3} \left(1-\lambda^2\right)^{1/6}
e^{i \theta^{+}(\lambda)}
\left(z-z^{+}(\lambda)\right)
+\mathcal{O}\left\{\left(z-z^{+}(\lambda)\right)^{2}\right\},
\end{equation}
where $(1-\lambda^2)^{1/6}>0$,
\begin{equation}
\label{63a}
\theta^{+}(\lambda) =
\tfrac{1}{3}\arcsin\left(2\lambda^{2}-1\right)
\in \left[-\tfrac{1}{6}\pi,\tfrac{1}{6}\pi\right)
\quad (0\leq \lambda <1),
\end{equation}
and by continuity elsewhere in $\mathbb{C}^{+}$. The branch for $\xi^{+}$ is the same as $\xi$, namely that it is an analytic function of $z$ in $\mathbb{C}^{+} \setminus \gamma_{2}$, with $\Re(\xi^{+}) \rightarrow \infty$ as $\Re(z) \rightarrow \infty$.

From (\ref{62}) we have
\begin{multline}
\label{65}
\tfrac{2}{3}\zeta^{3/2} =
\ln \left(\tfrac{1}{4}z\right)+1-\lambda \ln(1-\lambda)
+\tfrac{1}{2}(1+\lambda)\ln \left(1-\lambda^2\right) \\
-\tfrac{1}{2}(1-\lambda)\pi i + \mathcal{O}(z)
\quad (z \rightarrow 0),
\end{multline}
and
\begin{multline}
\label{64}
\tfrac{2}{3}\zeta^{3/2} =\tfrac{1}{2}z
-\lambda\ln(z)-\lambda -\ln(1-\lambda)
+\tfrac{1}{2}(1+\lambda)\ln \left(1-\lambda^2\right)  \\
-\tfrac{1}{2}(1-\lambda)\pi i+\mathcal{O}\left(z^{-1}\right)
\quad (z \rightarrow \infty).
\end{multline}
Thus from (\ref{65}) and (\ref{64}) we observe that $\zeta \rightarrow \infty$ in the sectors $$\mathbf{S}_{j}:=\{\zeta: |\arg(\zeta e^{-2\pi i j/3})|\leq \pi /3\}$$ as $z \rightarrow z^{(1)}$ ($j=-1$), $z \rightarrow z^{(2)}$ ($j=0$), and $z \rightarrow z^{(3)}$ ($j=1$). Also note that the curve $\gamma_{1}$ corresponds to the line $\arg(\zeta)=0$, and the curves $\gamma_{j}$ ($j=2,3$) correspond to part of the lines $\arg(\zeta)=\pm 2 \pi /3$. Since the $z-\zeta$ mapping is conformal at the turning point it follows that these three level curves meet at an angle $2\pi/3$ with one another at this point of intersection.

In our turning point expansions we find from \cite[Thm. 2.1]{Dunster:2017:COA} that the odd coefficients need to be such that $(z-z^{+}(\lambda))^{1/2}E_{2m+1}(z)$ ($m=0,1,2,\cdots$) are meromorphic at $z=z^{+}(\lambda)$. The odd coefficients of \cref{sec2} do not meet this criterion, but this is achieved by modifying (\ref{99f}) to the slightly different coefficients defined by
\begin{equation}
\label{99i}
E_{2m+1}^{+}(z)
=\frac{1}{8\left(1-\lambda^{2}\right)}
\left\{\int_{0}^{\beta} \frac{p_{4m+3}^{(e)}(\tau(b))}
{\tilde{Z}^{3s-1}(b)}db
+\int_{1}^{\beta} \frac{p_{4m+3}^{(o)}(\tau(b))}
{\tilde{Z}^{3s-1}(b)}db\right\},
\end{equation}
where $\tau(b)$ is given by (\ref{99ccc}) with $\beta=b$. From (\ref{99g}) the first integral results in an odd polynomial in $\beta$, which from (\ref{92}), (\ref{99c}) and (\ref{99cc}) is of the desired form. 

To show that the same is true of the second integral, we make the $u$ substitution as discussed after (\ref{99b}), and this leads to a sum of integrals of the form
\begin{equation} 
\label{99ii}
\mathrm{constant}\times\int_{0}^{1-\beta^2}(1-u)^{s}
u^{3m-s-\frac{1}{2}}du
\quad (s=0,1,2,\cdots 2m+1).
\end{equation}
Now from (\ref{92}), (\ref{99c}) and (\ref{99cc})
\begin{equation} 
\label{99jj}
\left(1-\beta^{2}\right)^{\frac{1}{2}}
=\frac{2\left(1-\lambda^{2}\right)^{\frac{1}{2}}}{Z}.
\end{equation}
Hence on expanding the integrated terms of (\ref{99ii}), and referring to (\ref{99c}) and (\ref{99cc}) again, results in sums of terms of the form
\begin{equation} 
\label{99kk}
\mathrm{constant}\times\beta^{2r}
\left(1-\beta^{2}\right)^{\frac{1}{2}}
=\mathrm{constant}\times
\frac{(z-2\lambda)^{2r}}{Z^{2r+1}}
\quad (r\in \mathbb{N}),
\end{equation}
which is also of the desired form. 

Of course $\hat{E}_{2m+1}^{+}(z)$ and $\hat{E}_{2m+1}(z)$ only differ by an additive constant, namely $\hat{E}_{2m+1}^{+}(\infty)$ (see (\ref{97a})). For example, from (\ref{98}) we see that
\begin{equation} \label{99ll}
\hat{E}_{1}^{+}(z)=
\hat{E}_{1}(z)+\frac{\lambda}{24\left(1-\lambda^{2}\right)}
=\frac{\lambda z^{3}+6\left(1-2\lambda^{2}\right)
z^{2}+12\lambda^{3}z+8\lambda^{2}-16}
{24\left(1-\lambda^{2}\right)Z^{3}}.
\end{equation}

We remark, unlike their odd counterparts, the even coefficients in our turning point expansions do not need modifying, and so
\begin{equation} 
\label{99m}
\hat{E}_{2m}^{+}(z)=\hat{E}_{2m}(z)
\quad (m=1,2,3,\cdots),
\end{equation}
and hence from (\ref{97a}) $\hat{E}_{2m}^{+}(\infty)=0$. The odd coefficients do not in general vanish at infinity, however the branch cut $\gamma_{2}$ is the same as for the coefficients in the previous section, and hence $\lim_{z \rightarrow \infty}\hat{E}_{2m+1}^{+}(z)=\hat{E}_{2m+1}^{+}(\infty)$ is independent of direction in the upper half plane of this limit.

We next define coefficient functions from \cite[Eqs. (1.16) - (1.18)]{Dunster:2020:SEB}. These involve two sequences of numbers given by $\left\{a_{s}\right\}_{s=1}^{\infty}$ and $\left\{\tilde{a}_{s}\right\}_{s=1}^{\infty}$, where $a_{1}=a_{2}=\frac{5}{72}$, $\tilde{a}_{1}=\tilde{a}_{2}=-\frac{7}{72}$, and subsequent terms $a_{s}$ and $\tilde{{a}}_{s}$ ($s=3,4,5,\cdots $) satisfying the same recursion formula, namely
\begin{equation}  
\label{73}
b_{s+1}=\frac{1}{2}\left(s+1\right) b_{s}+\frac{1}{2}
\sum\limits_{j=1}^{s-1}{b_{j}b_{s-j}} 
\quad (s=2,3,4,\cdots).
\end{equation}
Then for $s=1,2,3,\cdots$ define
\begin{equation}  
\label{74}
\mathcal{E}_{s}(z) =\hat{E}_{s}^{+}(z) +
(-1)^{s}a_{s}s^{-1}(\xi^{+})^{-s},
\end{equation}
and
\begin{equation}  
\label{75}
\tilde{\mathcal{E}}_{s}(z) =\hat{E}_{s}^{+}(z)
+(-1)^{s}\tilde{a}_{s}s^{-1}(\xi^{+})^{-s}.
\end{equation}

From \cite[Thm. 3.4]{Dunster:2020:SEB} three asymptotic solutions of (\ref{88}) are given by $w_{m,j}(\mu,z)$, where
\begin{equation}  
\label{79}
w_{m,j}(\mu,z)=\mathrm{Ai}_{j} 
\left(\mu^{2/3}\zeta \right)\mathcal{A}_{2m+2}(\mu,z)
+\mathrm{Ai}'_{j} \left(\mu^{2/3}\zeta \right)
\mathcal{B}_{2m+2}(\mu,z)
\quad (j=0,\pm 1),
\end{equation}
in which the Airy functions of complex argument are defined by
\begin{equation}  
\label{79b}
\mathrm{Ai}_{j}(\zeta)=\mathrm{Ai}(\zeta e^{-2\pi i j/3})
\quad (j=0,\pm 1).
\end{equation}
These are recessive in the sectors $\mathbf{S}_{j}$; see \cite[Chap. 11, Sect. 8.1]{Olver:1997:ASF}.

The coefficient functions $\mathcal{A}_{2m+2}(\mu,z)$ and $\mathcal{B}_{2m+2}(\mu,z)$ are slowly varying in $\mu$ and are analytic in $z \in \mathbb{C}^{+}$, and as $\mu \rightarrow \infty$ they possess the asymptotic expansions
\begin{equation}  
\label{76}
\mathcal{A}_{2m+2}(\mu,z) = \phi(z)\exp \left\{ \sum\limits_{s=1}^{m}\frac{
\tilde{\mathcal{E}}_{2s}(z)}{\mu^{2s}}\right\} \cosh \left\{ \sum\limits_{s=0}^{m}\frac{\tilde{\mathcal{E}}_{2s+1}(z)}{\mu^{2s+1}}\right\}\left\{1+\tilde{\eta}_{2m+2}(\mu,z)\right\},
\end{equation}
and
\begin{equation}  
\label{77}
\mathcal{B}_{2m+2}(\mu,z) = \frac{\phi(z)}{\mu^{1/3}\zeta^{1/2}}
\exp \left\{ \sum\limits_{s=1}^{m}\frac{
\mathcal{E}_{2s}(z)}{\mu^{2s}}\right\} \sinh \left\{ \sum\limits_{s=0}^{m}\frac{\mathcal{E}_{2s+1}(z)}
{\mu^{2s+1}}\right\}\left\{1+\eta_{2m+2}(\mu,z)\right\},
\end{equation}
where
\begin{equation}  
\label{78}
\phi(z)=z^{1/2}\left(\frac{\zeta}{z^{2}-4\lambda z+4}\right)^{1/4}
=\left(\frac{z}{Z}\right)^{1/2}\zeta^{1/4}.
\end{equation}
Note that $\phi(z)$ has a removable singularity at the turning point $z=z^{+}(\lambda)$ since $\zeta$, as defined by (\ref{62}), has a simple zero at this point.

The relative errors $\tilde{\eta}_{2m+2}(\mu,z)$ and $\eta_{2m+2}(\mu,z)$ are $\mathcal{O}(\mu^{-2m-2})$ as $\mu \rightarrow \infty$, $\lambda \in [0,1-\delta]$, uniformly for $z \in \mathbb{C}^{+}$. Bounds for them are provided in \cite[Thms. 3.4 and 4.2]{Dunster:2020:SEB}. For $j=0,\pm 1$ the solution $w_{m,j}(\mu,z)$ is characterised as being recessive in $S_{j}$, just like the three Whittaker functions we are approximating.

As shown in \cite{Dunster:2017:COA} in an application to Bessel functions, expansions of the form (\ref{76}) and (\ref{77}) are highly accurate and can be computed without difficulty. Note that $\mathcal{E}_{2s}(z)$ and $\tilde{\mathcal{E}}_{2s}(z)$ are not analytic at  $z=z^{+}(\lambda)$, thus the expansions cannot be used directly at or near this point. Since $\mathcal{A}(\mu,z)$ and $\mathcal{B}(\mu,z)$ are analytic at the turning point there are two options to compute them when $z$ is close to this point. Firstly, if many terms are required for high accuracy, Cauchy's integral formula can be used, as described in \cite[Eq. (2.31)]{Dunster:2017:COA} and \cite[Thm. 4.2]{Dunster:2020:SEB}. However, if a few terms are required then (\ref{76}) and (\ref{77}) can be expanded in a standard asymptotic series involving inverse powers of $\mu^{2}$. Each coefficient in these has a removable singularity at $z=z^{+}(\lambda)$ and can be computed via a Taylor series.

We now state our main result of this section.
\begin{theorem}
\label{thm:MWAiry}
As $\mu \rightarrow \infty$, $\lambda=\kappa/\mu \in [0,1-\delta]$, positive integer $m$, and $z \in \mathbb{C}^{+}$
\begin{multline}
\label{54}
M_{\kappa,\mu}(\mu z)
=\frac{2\sqrt{\pi}(1 -\lambda)^{\mu}
\Gamma\left(2\mu+1\right)}
{\mu^{\kappa-\frac{1}{6}}\left( 1-\lambda^{2} \right) ^{(\mu+\kappa)/2}
\Gamma\left(\mu-\kappa+\tfrac{1}{2}\right)}
\\
\times \exp\left\{\frac{1}{2}
\left(\mu-\kappa+\frac{1}{3}\right) \pi i 
+\kappa -\sum\limits_{s=0}^{m}\frac{\hat{E}_{2s+1}^{+}(\infty)}
{\mu^{2s+1}}  \right\}
\left\{1+\delta_{2m+2}^{+}(\mu)\right\}^{-1} w_{m,-1}(\mu,z),
\end{multline}
\begin{multline}
\label{50}
W_{\kappa,\mu}(\mu z)=2\sqrt {\pi}\mu^{\kappa+\frac{1}{6}} 
\left( 1-\lambda^{2} \right) ^{\kappa/2}
\left( \frac{1+\lambda}{1 -\lambda} \right) ^{\mu/2}
\\
\times \exp\left\{\frac{1}{2}\left(\kappa -\mu \right) \pi i 
-\kappa +\sum\limits_{s=0}^{m}\frac{\hat{E}_{2s+1}^{+}(\infty)}
{\mu^{2s+1}}  \right\}
\left\{1+\delta_{2m+2}^{-}(\mu)\right\}^{-1} w_{m,0}(\mu,z),
\end{multline}
and
\begin{multline}
\label{52}
W_{-\kappa,\mu}\left(\mu ze^{-\pi i}\right)
=\frac{2\sqrt {\pi} (1 -\lambda)^{\mu}}
{\mu^{\kappa-\frac{1}{6}}\left( 1-\lambda^{2} \right) ^{(\mu+\kappa)/2}}
\\
\times \exp\left\{\frac{1}{2}
\left(\mu+\kappa-\frac{1}{3}\right) \pi i 
+\kappa -\sum\limits_{s=0}^{m}\frac{\hat{E}_{2s+1}^{+}(\infty)}
{\mu^{2s+1}}  \right\}
\left\{1+\delta_{2m+2}^{+}(\mu)\right\}^{-1} w_{m,1}(\mu,z),
\end{multline}
where
\begin{multline}  
\label{51}
|\delta_{2m+2}^{\pm}(\mu)| \leq
\max\left\{|\tilde{\eta}_{2m+2}(\mu,\infty)|,
|\eta_{2m+2}(\mu,\infty)|\right\}  \\  \times
\left[1+\exp\left\{\pm 2\sum\limits_{s=0}^{m}
\frac{\hat{E}_{2s+1}^{+}(\infty)}
{\mu^{2s+1}}\right\}\right].
\end{multline}
Here $\mu$ is assumed to be sufficiently large so that $|\delta_{2m+2}^{\pm}(\mu)|$ are less than $1$.
\end{theorem}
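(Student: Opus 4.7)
The plan is to exploit the uniqueness (up to scalar) of the recessive solution of~(\ref{88}) in each sector $S_j$. Since $w_{m,j}(\mu,z)$ defined by~(\ref{79}) is recessive in $S_j$, while the Whittaker functions $M_{\kappa,\mu}(\mu z)$, $W_{\kappa,\mu}(\mu z)$ and $W_{-\kappa,\mu}(\mu ze^{-\pi i})$ are respectively recessive in $S_{-1}$, $S_0$ and $S_1$, there exist constants $C_j(\mu,m)$ for $j=-1,0,1$ with $M_{\kappa,\mu}(\mu z)=C_{-1}w_{m,-1}$, $W_{\kappa,\mu}(\mu z)=C_0 w_{m,0}$, and $W_{-\kappa,\mu}(\mu ze^{-\pi i})=C_1w_{m,1}$. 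The theorem reduces to determining these constants explicitly and bounding the relative error introduced by the asymptotic expansions~(\ref{76}) and~(\ref{77}) of $\mathcal A_{2m+2}$ and $\mathcal B_{2m+2}$.

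I would first derive (\ref{50}) as the template. Letting $z\to+\infty$ along the positive real axis, $\mu^{2/3}\zeta$ lies in the recessive ray of $\mathrm{Ai}_0=\mathrm{Ai}$, so the classical large-argument asymptotics of $\mathrm{Ai}$ and $\mathrm{Ai}'$ apply. Since $\hat E_{2s}^+(\infty)=0$ by~(\ref{99m}) combined with~(\ref{97a}), the even-exponential factors in~(\ref{76})/(\ref{77}) tend to $1$ while the hyperbolic arguments both converge to $A_\infty:=\sum_{s=0}^{m}\hat E_{2s+1}^+(\infty)/\mu^{2s+1}$. Using $\phi(z)\zeta^{-1/4}=(z/Z)^{1/2}\to 1$ and expanding $\mu\xi^+$ via~(\ref{64}) extracts the factor $e^{-\mu z/2}z^{\kappa}e^{\kappa}(1-\lambda)^{\mu}(1-\lambda^2)^{-(\mu+\kappa)/2}e^{(\mu-\kappa)\pi i/2}$, and the combination appearing inside reduces to $\cosh(A_\infty)(1+\tilde\eta_{2m+2}(\mu,\infty))-\sinh(A_\infty)(1+\eta_{2m+2}(\mu,\infty))$, whose leading value is $e^{-A_\infty}$. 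Matching against~(\ref{110}) together with the algebraic identity $(1-\lambda^2)^{(\mu+\kappa)/2}/(1-\lambda)^{\mu}=(1-\lambda^2)^{\kappa/2}\bigl((1+\lambda)/(1-\lambda)\bigr)^{\mu/2}$ then produces (\ref{50}).

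The expansion (\ref{52}) is obtained from the analogous limit $z\to\infty e^{\pi i}$ using~(\ref{129a}), the rotated Airy asymptotics for $\mathrm{Ai}_1(\mu^{2/3}\zeta)=\mathrm{Ai}(\mu^{2/3}\zeta e^{-2\pi i/3})$, and~(\ref{64}) analytically continued across $\gamma_2$. The rotational phase factors $e^{\pi i/6}$ from $(\mu^{2/3}\zeta e^{-2\pi i/3})^{-1/4}$ and $-e^{\pi i/6}$ from the corresponding piece of the derivative combine so that the hyperbolic bracket reduces to $e^{\pi i/6}(\cosh A_\infty+\sinh A_\infty)=e^{\pi i/6}e^{A_\infty}$ at leading order, and merging with the imaginary increments of $\mu\xi^+$ produces the stated prefactor phase $e^{\frac{1}{2}(\mu+\kappa-\frac13)\pi i}$. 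For (\ref{54}) the most economical route is via the connection formula~(\ref{55}): the three-term Airy identity $\mathrm{Ai}_0(w)+e^{-2\pi i/3}\mathrm{Ai}_1(w)+e^{2\pi i/3}\mathrm{Ai}_{-1}(w)=0$ yields the corresponding relation $w_{m,-1}=-e^{-2\pi i/3}w_{m,0}-e^{2\pi i/3}w_{m,1}$, so that equating coefficients of the linearly-independent $w_{m,0}$ and $w_{m,1}$ in $C_{-1}w_{m,-1}=A\,C_0w_{m,0}+B\,C_1w_{m,1}$ (where $A$, $B$ are the Gamma-containing coefficients in~(\ref{55})) gives $C_{-1}=-e^{-2\pi i/3}\,B\,C_1$. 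The accumulated phase collapses to $e^{\frac{1}{2}(\mu-\kappa+\frac13)\pi i}$, and the Gamma ratio $\Gamma(2\mu+1)/\Gamma(\mu-\kappa+\frac12)$ is inherited directly from $B$ in~(\ref{55}), reproducing exactly (\ref{54}).

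Finally, the error bound~(\ref{51}) follows from the identity $\cosh(A_\infty)(1+\tilde\eta_{2m+2}(\mu,\infty))\mp\sinh(A_\infty)(1+\eta_{2m+2}(\mu,\infty))=e^{\mp A_\infty}\{1+\delta^{\pm}_{2m+2}(\mu)\}$. Solving algebraically for $\delta^\pm$ yields a linear combination of $\tilde\eta_{2m+2}(\mu,\infty)$ and $\eta_{2m+2}(\mu,\infty)$ with coefficients involving $e^{\pm 2A_\infty}\pm 1$; applying the triangle inequality, bounding both relative errors by their common maximum, and using $|e^{\pm 2A_\infty}-1|\leq 1+e^{\pm 2A_\infty}$ produces the stated inequality. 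The hypothesis $|\delta^\pm|<1$ holds for $\mu$ sufficiently large in view of the $\mathcal O(\mu^{-2m-2})$ bounds on $\tilde\eta_{2m+2}$ and $\eta_{2m+2}$ from~\cite[Thms.~3.4 and~4.2]{Dunster:2020:SEB}, and ensures that $\{1+\delta^\pm\}^{-1}$ is well-defined. The main technical obstacle is the phase bookkeeping in the $W_{-\kappa,\mu}(\mu ze^{-\pi i})$ case (and in any direct derivation of (\ref{54}) at $z=0$), where the rotated Airy branches, the principal branches of $\zeta^{1/2}$ and $\xi^+$, and the imaginary contributions from $\mu\xi^+$ at the distant singularity must all be threaded consistently; once these are aligned the algebra collapses exactly to the stated prefactors.
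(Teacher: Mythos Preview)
Your overall strategy---matching recessive solutions in each sector and determining the proportionality constants by a limiting argument---is exactly the paper's. The difference is tactical: the paper evaluates \emph{all three} constants by letting $\Re(z)\to+\infty$, using the dominant asymptotic~(\ref{53}) for $M_{\kappa,\mu}$ and the corresponding dominant behaviour of $\mathrm{Ai}_{-1}(\mu^{2/3}\zeta)$ (via~(\ref{53a})), and similarly the dominant behaviour of $W_{-\kappa,\mu}(\mu z e^{-\pi i})$ and $\mathrm{Ai}_1(\mu^{2/3}\zeta)$ there. This single reference point keeps every branch principal, makes $\hat E_{2s}^+(\infty)=0$ available uniformly, and sidesteps precisely the ``phase bookkeeping'' you flag as the main obstacle. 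Your route for~(\ref{52}) sends $z\to\infty e^{\pi i}$ and therefore requires continuing $\xi^+$ across $\gamma_2$ and tracking the rotated Airy branches, which is correct but needlessly delicate. For~(\ref{54}) you invoke the connection formula~(\ref{55}) together with the Airy relation~(\ref{56}); the paper instead gets $c_{m,-1}$ directly at $+\infty$ and only afterwards combines~(\ref{55}) and~(\ref{56}) to obtain the auxiliary expansion~(\ref{57}). Both orderings are valid, and your derivation of the bound~(\ref{51}) from the $\cosh/\sinh$ combination is in fact more explicit than what the paper records.
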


\begin{proof}
The three identifications come from matching recessive solutions at the singularities $z=z^{(j)}$. All proportionality constants are found by comparing both sides of the equations as $\Re(z) \rightarrow \infty$. Therefore, by matching solutions that are recessive at $z=z^{(1)}=0$ we have $M_{\kappa,\mu}(\mu z)=c_{m,-1}(\mu)w_{m,-1}(\mu,z)$, and the constant can be determined via
\begin{equation} 
\label{52a}
c_{m,-1}(\mu)=\lim_{\Re(z) \rightarrow \infty}
\frac{M_{\kappa,\mu}(\mu z)}{w_{m,-1}(\mu,z)}.
\end{equation}
Now using (\ref{79}) - (\ref{78}) and \cite[Eq. 13.19.2]{NIST:DLMF}
\begin{equation}  
\label{53}
M_{\kappa,\mu}(z) \sim
\frac{\Gamma\left(2\mu+1\right)z^{-\kappa}
e^{z/2}}{\Gamma\left(\mu-\kappa+\tfrac{1}{2}\right)}
\quad  (\Re(z) \rightarrow \infty),
\end{equation}
along with
\begin{equation}  
\label{53a}
\mathrm{Ai}_{-1}\left( \mu^{2/3}\zeta \right) \sim
\frac{\exp\left(\mu\xi^{+}-\frac{1}{6}\pi i\right)}
{2\pi ^{1/2}\mu^{1/6}\zeta^{1/4}}
\quad  (\Re(\zeta) \rightarrow \infty),
\end{equation}
which comes from (\ref{79b}) and \cite[Eq. 9.7.5]{NIST:DLMF}, and recalling that $\hat{E}_{2s}^{+}(\infty)=0$, we find $c_{m,-1}$ and arrive at the stated result. The identifications (\ref{50}) and (\ref{52}) follow similarly by matching solutions recessive at $z^{(2)}$ and $z=z^{(3)}$, and again finding the proportionality constants by comparing both sides as $\Re(z) \rightarrow \infty$.
\end{proof}

Next using (\ref{55}), \cref{thm:MWAiry} and \cite[Eq. (22)]{Dunster:2020:ASI}
\begin{equation}
\label{56}
w_{m,0}(u,z)+e^{-2\pi i/3}w_{m,1}(u,z)
+e^{2\pi i/3}w_{m,-1}(u,z)=0,
\end{equation}
yields the formal expansion as $\mu \rightarrow \infty$
\begin{equation}
\label{57}
\left(\frac{1-\lambda}{1+\lambda}\right)^{\mu}
\left\{\frac{e^2}{\mu^2\left( 1-\lambda^{2} \right)}
\right\}^{\lambda \mu}
\frac{\Gamma \left(\mu +\lambda \mu +\frac{1}{2}\right)}
{\Gamma \left(\mu -\lambda \mu +\frac{1}{2}\right)}
\sim
\exp\left\{2\sum\limits_{s=0}^{\infty}
\frac{\hat{E}_{2s+1}^{+}(\infty)}
{\mu^{2s+1}}\right\}.
\end{equation}
On taking the logarithm of both sides, then expanding the LHS as an asymptotic series in inverse powers of $\mu$ (see \cite[Eq. 5.11.1]{NIST:DLMF}), and finally matching like powers of this parameter, yields an explicit expression for each constant $\hat{E}_{2s+1}^{+}(\infty)$.

Finally, if one is willing to forgo error bounds, somewhat more compact asymptotic expansions come from replacing the constants $\hat{E}_{2s+1}^{+}(\infty)$ in \cref{thm:MWAiry} by utilising (\ref{57}). The result reads as follows.

\begin{theorem}
As $\mu \rightarrow \infty$ with $\lambda=\kappa/\mu \in [0,1-\delta]$
\begin{equation}
\label{59}
M_{\kappa,\mu}(\mu z)  \sim
\frac{2\sqrt {\pi} \exp\left\{\frac{1}{2}
\left(\mu-\kappa+\frac{1}{3}\right) \pi i\right\}\mu^{1/6}
\Gamma(2\mu+1) w_{-1}(\mu, z)}
{\left\{\Gamma \left(\mu +\kappa +\tfrac{1}{2}\right)
\Gamma \left(\mu -\kappa +\tfrac{1}{2}\right)\right\}^{1/2}},
\end{equation}
\begin{equation}
\label{58}
W_{\kappa,\mu}(\mu z) \sim
2\sqrt {\pi} 
\exp\left\{\frac{1}{2}\left(\kappa-\mu\right) \pi i\right\} 
\mu^{1/6} \left\{\frac{\Gamma \left(\mu +\kappa +\frac{1}{2}\right)}
{\Gamma \left(\mu -\kappa +\frac{1}{2}\right)}\right\}^{1/2}
w_{0}(\mu,z),
\end{equation}
and
\begin{equation}
\label{59aa}
W_{-\kappa,\mu}\left(\mu ze^{-\pi i}\right) \sim
2\sqrt {\pi} 
\exp\left\{\frac{1}{2}
\left(\mu+\kappa-\frac{1}{3}\right) \pi i\right\}
\mu^{1/6} 
\left\{\frac{\Gamma \left(\mu -\kappa +\frac{1}{2}\right)}
{\Gamma \left(\mu +\kappa +\frac{1}{2}\right)}\right\}^{1/2}w_{1}(\mu,z),
\end{equation}
uniformly for $z \in \mathbb{C}^{+}$, where $w_{j}(\mu,z)$ ($j=0,\pm 1$) are given by (\ref{79}) - (\ref{78}) with $\eta_{2m+2}(\mu,z)=\tilde{\eta}_{2m+2}(\mu,z)
\allowbreak = 0$ and $m=\infty$.
\end{theorem}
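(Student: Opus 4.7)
The plan is to derive the three asymptotic formulas by substituting the relation (\ref{57}) into the matched representations (\ref{54}), (\ref{50}), (\ref{52}) of \cref{thm:MWAiry}. Taking square roots of (\ref{57}) formally yields
\begin{equation*}
\exp\left\{\sum_{s=0}^{\infty}\frac{\hat{E}_{2s+1}^{+}(\infty)}{\mu^{2s+1}}\right\}
\sim \left[\left(\frac{1-\lambda}{1+\lambda}\right)^{\mu}
\left(\frac{e^{2}}{\mu^{2}\left(1-\lambda^{2}\right)}\right)^{\kappa}
\frac{\Gamma\left(\mu+\kappa+\tfrac{1}{2}\right)}{\Gamma\left(\mu-\kappa+\tfrac{1}{2}\right)}\right]^{1/2},
\end{equation*}
using $\lambda \mu = \kappa$. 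This identity, together with its reciprocal, will replace every occurrence of $\sum_{s=0}^{m}\hat{E}_{2s+1}^{+}(\infty)/\mu^{2s+1}$ in \cref{thm:MWAiry}. Since we are forgoing error bounds, we let $m\to\infty$ formally; the factors $\{1+\delta_{2m+2}^{\pm}(\mu)\}^{-1}$ are $1+\mathcal{O}(\mu^{-2m-2})$ by (\ref{51}) and so are absorbed into the asymptotic relation, and the coefficient functions $\mathcal{A}_{2m+2}$, $\mathcal{B}_{2m+2}$ appearing in $w_{m,j}(\mu,z)$ are replaced by their full formal expansions as described in the statement.

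For (\ref{58}), I would substitute the displayed expression directly into (\ref{50}); the algebraic factors collapse because the $(1-\lambda^{2})^{\kappa/2}$ from (\ref{50}) cancels against $(1-\lambda^{2})^{-\kappa/2}$ arising from $(e^{2}/\mu^{2}(1-\lambda^{2}))^{\kappa/2}$, the $\left((1-\lambda)/(1+\lambda)\right)^{\mu/2}$ cancels against $\left((1+\lambda)/(1-\lambda)\right)^{\mu/2}$, the $e^{-\kappa}$ in the exponent cancels the $e^{\kappa}$ from the square root, and the factor $\mu^{\kappa+1/6}\cdot\mu^{-\kappa}$ leaves $\mu^{1/6}$. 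What remains is exactly (\ref{58}). For (\ref{59}) and (\ref{59aa}) I would insert the reciprocal of the boxed asymptotic identity into (\ref{54}) and (\ref{52}) respectively; the same type of cancellation occurs, and the product $(1-\lambda)^{\mu}(1-\lambda^{2})^{-(\mu+\kappa)/2}$ combines with $((1+\lambda)/(1-\lambda))^{\mu/2}(1-\lambda^{2})^{\kappa/2}$ from the reciprocal to leave unity, yielding the $\Gamma(2\mu+1)/\{\Gamma(\mu+\kappa+\tfrac12)\Gamma(\mu-\kappa+\tfrac12)\}^{1/2}$ structure displayed in (\ref{59}) and the $\{\Gamma(\mu-\kappa+\tfrac12)/\Gamma(\mu+\kappa+\tfrac12)\}^{1/2}$ structure in (\ref{59aa}).

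There is no genuine analytic difficulty here: the Airy-type expansions of \cref{thm:MWAiry} provide the uniform validity on $\mathbb{C}^{+}$, and (\ref{57}) (which itself follows from the connection identity (\ref{55}) and the Airy identity (\ref{56})) supplies the missing piece that eliminates the unknown constants $\hat{E}_{2s+1}^{+}(\infty)$. The main obstacle is purely bookkeeping: one must track signs in the exponential prefactors ($\tfrac12(\mu\pm\kappa\pm\tfrac13)\pi i$), the correct branch of the square root of (\ref{57}), and the cancellations among the several $\lambda$-dependent algebraic factors. A convenient check is that the limiting case $\lambda = 0$, where (\ref{57}) reduces to Legendre's duplication formula for the gamma function, must recover the classical large-$\mu$ Airy expansion of $M_{0,\mu}$ and $W_{0,\mu}$.
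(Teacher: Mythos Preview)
Your proposal is correct and follows exactly the route indicated in the paper: substitute the square root of the formal identity (\ref{57}) (and its reciprocal) into the three representations (\ref{54}), (\ref{50}), (\ref{52}) of \cref{thm:MWAiry}, absorb the $\{1+\delta_{2m+2}^{\pm}(\mu)\}^{-1}$ factors into the asymptotic relation, and let $m\to\infty$. The paper gives no further detail than this, so your explicit tracking of the algebraic cancellations is already more than what appears there.
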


\section*{Acknowledgments}
Financial support from Ministerio de Ciencia e Innovaci\'on, Spain, project PGC2018-098279-B-I00 (MCIU/AEI/FEDER, UE) is acknowledged.

\newpage

\bibliographystyle{siamplain}
\bibliography{biblio}

\begin{thebibliography}{10}

\bibitem{Boyd:1986:UAS}
{\sc W.~G.~C. Boyd and T.~M. Dunster}, {\em Uniform asymptotic solutions of a
  class of second-order linear differential equations having a turning point
  and a regular singularity, with an application to {L}egendre functions}, SIAM
  J. Math. Anal., 17 (1986), pp.~422--450,
  \url{https://doi.org/10.1137/0517033}.

\bibitem{NIST:DLMF}
{\em {\it NIST Digital Library of Mathematical Functions}}.
\newblock http://dlmf.nist.gov/, Release 1.1.1 of 2021-03-15,
  \url{http://dlmf.nist.gov/}.
\newblock F.~W.~J. Olver, A.~B. {Olde Daalhuis}, D.~W. Lozier, B.~I. Schneider,
  R.~F. Boisvert, C.~W. Clark, B.~R. Miller, B.~V. Saunders, H.~S. Cohl, and
  M.~A. McClain, eds.

\bibitem{Dunster:1989:UAW}
{\sc T.~M. Dunster}, {\em Uniform asymptotic expansions for {W}hittaker's
  confluent hypergeometric functions}, SIAM J. Math. Anal., 20 (1989),
  pp.~744--760, \url{https://doi.org/10.1137/0520052}.

\bibitem{Dunster:1994:UAS}
{\sc T.~M. Dunster}, {\em Uniform asymptotic solutions of second-order linear
  differential equations having a simple pole and a coalescing turning point in
  the complex plane}, SIAM J. Math. Anal., 25 (1994), pp.~322--353,
  \url{https://doi.org/10.1137/S0036141092229537}.

\bibitem{Dunster:2003:UAW}
{\sc T.~M. Dunster}, {\em Uniform asymptotic approximations for the {W}hittaker
  functions ${{M}}_{\kappa,i\mu}(z)$ and ${{W}}_{\kappa,i\mu}(z)$}, Anal.
  Appl., 1 (2003), pp.~199--212,
  \url{https://doi.org/10.1142/S0219530503000119}.

\bibitem{Dunster:2020:ASI}
{\sc T.~M. Dunster}, {\em Asymptotic solutions of inhomogeneous differential
  equations having a turning point.}, Stud. Appl. Math., 145 (2020),
  pp.~500--536, \url{https://doi.org/10.1111/sapm.12326}.

\bibitem{Dunster:2020:LGE}
{\sc T.~M. Dunster}, {\em Liouville-{G}reen expansions of exponential form,
  with an application to modified {B}essel functions}, Proc. Roy. Soc.
  Edinburgh Sec. A, 150 (2020), pp.~1289--1311,
  \url{https://doi.org/10.1017/prm.2018.117}.

\bibitem{Dunster:2017:COA}
{\sc T.~M. Dunster, A.~Gil, and J.~Segura}, {\em Computation of asymptotic
  expansions of turning point problems via {C}auchy's integral formula: Bessel
  functions.}, Constr. Approx., 46 (2017), pp.~645--675,
  \url{https://doi.org/10.1007/s00365-017-9372-8}.

\bibitem{Dunster:2020:SEB}
{\sc T.~M. Dunster, A.~Gil, and J.~Segura}, {\em Simplified error bounds for
  turning point expansions}, Anal. Appl.,  (2020),
  \url{https://doi.org/10.1142/S0219530520500104}.

\bibitem{Gaspard:2018:CFB}
{\sc D.~Gaspard}, {\em Connection formulas between coulomb wave functions}, J.
  Math. Phys., 59 (2018), \url{https://doi.org/10.1063/1.5054368}.

\bibitem{Hochstadt:1971:FMP}
{\sc H.~Hochstadt}, {\em The Functions of Mathematical Physics}, John Wiley
  {\&} Sons, Inc., New York-London-Sydney, 1971.

\bibitem{Nestor:1984:UAA}
{\sc J.~J. Nestor}, {\em Uniform Asymptotic Approximations of Solutions of
  Second-order Linear Differential Equations, with a Coalescing Simple Turning
  Point and Simple Pole}, PhD thesis, University of Maryland, College Park, MD,
  1984.

\bibitem{Olver:1975:SOL}
{\sc F.~W.~J. Olver}, {\em Second-order linear differential equations with two
  turning points}, Philos. Trans. R. Soc. A, 278 (1975), pp.~137--174,
  \url{https://doi.org/10.1098/rsta.1975.0023}.

\bibitem{Olver:1980:WFW}
{\sc F.~W.~J. Olver}, {\em Whittaker functions with both parameters large:
  uniform approximations in terms of parabolic cylinder functions}, Proc. Roy.
  Soc. Edinburgh Sect. A, 86 (1980), pp.~213--234,
  \url{https://doi.org/10.1017/S0308210500012130}.

\bibitem{Olver:1997:ASF}
{\sc F.~W.~J. Olver}, {\em Asymptotics and special functions}, AKP Classics, A
  K Peters Ltd., Wellesley, MA, 1997.
\newblock Reprint of the 1974 original [Academic Press, New York].

\end{thebibliography}

\end{document}